\newcommand{\ipa}[1]{\mathopen{\langle}#1\mathclose{\rangle}_A}
\newcommand{\ide}[2]{\mathopen{\langle}#1\mathclose{\rangle}_{#2}}
\newtheorem{Thm}{Theorem}
\newtheorem{Prop}[Thm]{Proposition}
\newtheorem{Lem}[Thm]{Lemma}
\newtheorem{Cl}[Thm]{Claim}
\theoremstyle{definition}
\newtheorem{Rem}[Thm]{Remark}
\newtheorem*{Ques}{Question}
\newcommand{\Cs}{C$^\ast$}
\newcommand{\rca}[1]{\mathop{\rtimes _{{\mathrm r}, #1}}}
\newcommand{\IB}{\mathbb B}
\newcommand{\IK}{\mathbb K}
\newcommand{\IM}{\mathbb M}
\newcommand{\IN}{\mathbb N}
\newcommand{\IT}{\mathbb T}
\newcommand{\IZ}{\mathbb Z}
\newcommand{\fH}{\mathfrak H}
\newcommand{\fs}{\mathfrak s}
\newcommand{\ft}{\mathfrak t}
\newcommand{\fl}{\mathfrak l}
\newcommand{\fk}{\mathfrak k}
\newcommand{\cZ}{\mathcal Z}
\newcommand{\cI}{\mathcal I}
\newcommand{\cM}{\mathcal M}
\newcommand{\acts}{\curvearrowright}
\newcommand{\ad}{\mathrm{Ad}}
\DeclareMathOperator{\bigfp}{\lower0.25ex\hbox{\LARGE $\ast$}}
\title[Amenable actions on ill-behaved simple \Cs-algebras]
{Amenable actions on ill-behaved simple \Cs-algebras}
\author{Yuhei Suzuki}
\subjclass[2020]{Primary~
46L55, Secondary~46L35}
\keywords{Non-commutative amenable actions, ill-behaved simple C$^\ast$-algebras}
\address{Department of Mathematics, Faculty of Science, Hokkaido University,
Kita 10, Nishi 8, Kita-Ku, Sapporo, Hokkaido, 060-0810, Japan}
\email{yuhei@math.sci.hokudai.ac.jp}
\begin{document}
\maketitle

\begin{abstract}
By combining R{\o}rdam's construction and the author's previous construction,
we provide the first examples of amenable actions of non-amenable groups on simple separable nuclear \Cs-algebras
that are neither stably finite nor purely infinite.
For free groups, we also provide unital examples.
We arrange the actions so that the crossed products are still simple
with both a finite and an infinite projection.

\end{abstract}
\section{Introduction}
The symmetrical structure of operator algebras is significant for deep understandings
of operator algebras and related phenomena.
Quite successful milestones include Tomita--Takesaki theory, Connes's classification theory of injective von Neumann algebras,
and Popa's deformation/rigidity theory.

The aim of this paper is to shed light on the symmetrical structure of simple \Cs-algebras not covered
in the expected type classification analogue of the Murray--von Neumann type classification of factors,
that is, simple \Cs-algebras containing both a finite and an infinite projection. Simple (separable nuclear) \Cs-algebras of this exceptional type have been constructed by R{\o}rdam \cite{Ror}.
We point out that such mysterious simple \Cs-algebras do not appear in the current range of classification theory of unital simple nuclear \Cs-algebras (see e.g., \cite{CGS}, \cite{Win}).
(In fact such a simple separable nuclear \Cs-algebra $A$ has the same Elliott invariant and the Kasparov class as the Kirchberg algebra $A\otimes \mathcal{O}_\infty$.)
 
We continue further investigation of the following fundamental question studied in \cite{Suzeq}, \cite{OS}, \cite{Suzf}, \cite{SuzP}.
\begin{Ques}
Which non-amenable group admits an amenable action on which simple \Cs-algebra?
\end{Ques}
We construct amenable actions on simple \Cs-algebras of the forementioned exceptional type,
by combining R{\o}rdam's construction \cite{Ror} together with our construction of amenable actions in \cite{Suzf}.

\subsection*{Motivations}
Contrary to expectations by experts (see e.g., \cite{AnaT}, \cite{BO}),
unlike the von Neumann algebras \cite{Ana79},
it is shown in \cite{Suzeq} that non-amenable groups can act amenably on highly non-commutative \Cs-algebras,
remarkably on simple \Cs-algebras.
This discovery leads to unexpected developments in classification theory.
Amenability of \Cs-dynamical systems was first used in \cite{Suz21} for classification type results.
Now, by the major breakthrough of Gabe--Szab\'o 
(the classification theorem of amenable actions on Kirchberg algebras \cite{GS2}),
there is no doubt that amenability of \Cs-dynamical systems, not of the acting group, is a primal ingredient for the classification of \Cs-dynamical systems.
Therefore it is notable that whatever the group is,
it can act amenably on ill-behaved simple (nuclear) \Cs-algebras.
In other words, some of these \Cs-algebras could still have a rich symmetrical structure.
These days there are some attempts of (partial) classifications and finer understandings of ill-behaved (i.e., failures of good comparison properties) simple separable nuclear \Cs-algebras; see e.g., \cite{ELN}, \cite{HP1}, \cite{HP2}, \cite{LR}, \cite{Niu}, \cite{SuzIMRN}.
Our results will have an impact on the research in this direction.

\subsection*{Main results}
Now we present the main theorems of this paper.
\begin{Thm}\label{Thm:1}
Let $\Gamma$ be a countable group.
Then there is an amenable pointwise outer\footnote{An automorphism $\alpha$ on a \Cs-algebra $A$ is said to be outer,
if there is no unitary multiplier element $u\in \cM(A)$ with $\alpha=\ad(u)$. We say an action $\alpha \colon \Gamma \acts A$
is pointwise outer if $\alpha_g$ is outer for all $g\in \Gamma \setminus\{e\}$.} action $\alpha\colon \Gamma \acts A$ on a \Cs-algebra with the following properties.
\begin{enumerate}[\upshape(i)]
\item\label{item:a} $A$ is simple, separable, nuclear, and satisfies the universal coefficient theorem {\rm \cite{RS}}.
\item\label{item:b} $A$ has both a finite and an infinite nonzero projection.
\item\label{item:c} The reduced crossed product $A\rca{\alpha} \Gamma$ also satisfies the properties in {\rm (\ref{item:a})} and {\rm (\ref{item:b})}.
\end{enumerate}
Moreover, when $\Gamma$ is a free group, one can arrange $A$ to be unital.
\end{Thm}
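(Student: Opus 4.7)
The strategy is to carry out R{\o}rdam's construction \cite{Ror} in a $\Gamma$-equivariant manner, using as the basic input an amenable action of $\Gamma$ of the type produced in \cite{Suzf}. First I would recall that R{\o}rdam's simple separable nuclear \Cs-algebra with both a finite and an infinite nonzero projection is assembled from AH-type building blocks together with a carefully chosen auxiliary dynamics (a $\IZ$-action on an inductive system), designed so that in the limit one specific projection remains properly infinite while another remains finite, while simplicity, nuclearity, and the UCT are preserved. These building blocks carry enough symmetry to accommodate an additional commuting $\Gamma$-action without disturbing the mechanism that produces the coexistence of finite and infinite projections.

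Next I would invoke \cite{Suzf} to equip the building blocks (or a compatible tensor factor) with an amenable pointwise outer $\Gamma$-action $\beta$ that commutes with R{\o}rdam's auxiliary dynamics. Passing to the inductive limit (or taking the relevant $\IZ$-crossed product) produces an algebra $A$ carrying a $\Gamma$-action $\alpha$. Amenability and pointwise outerness of $\alpha$ survive because both properties pass through equivariant inductive limits and equivariant crossed products by amenable groups such as $\IZ$. I would then verify (\ref{item:a})--(\ref{item:c}) as follows: simplicity, nuclearity, separability, and the UCT for $A$ are inherited from R{\o}rdam's analysis, and (\ref{item:b}) is the defining feature of his construction. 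For the crossed product $A \rca{\alpha} \Gamma$, nuclearity and the UCT follow from amenability of $\alpha$ together with standard stability results, simplicity follows from pointwise outerness and simplicity of $A$ via Kishimoto-type rigidity available for amenable actions, a finite projection in $A$ remains finite in $A \rca{\alpha} \Gamma$ by restricting a $\Gamma$-invariant trace that R{\o}rdam's construction provides (after averaging), while a properly infinite projection in $A$ stays properly infinite in any enveloping \Cs-algebra.

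The main obstacle is the equivariant compatibility: arranging simultaneously that R{\o}rdam's delicate K-theoretic mechanism survives the introduction of the $\Gamma$-action and the subsequent passage to the crossed product, while keeping the $\Gamma$-action amenable and pointwise outer. Overcoming this likely requires a tensor product trick decoupling the $\Gamma$-dynamics from the R{\o}rdam dynamics, together with a careful choice of fullness of $\beta$ on each building block to prevent the $\Gamma$-crossed product from collapsing the finiteness of the distinguished projection. For the unital case with free groups, I would start from a unital building block in R{\o}rdam's construction and use the unital amenable actions of free groups from \cite{Suzf} (which are available because free groups admit amenable boundary actions on the Cantor set), tracking unitality through every step of the equivariant construction.
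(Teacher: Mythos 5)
There is a genuine gap, and it sits exactly at the crux of the theorem. Your proposal establishes finiteness of a projection in $A \rca{\alpha}\Gamma$ ``by restricting a $\Gamma$-invariant trace that R{\o}rdam's construction provides (after averaging)'' --- but no such trace can exist: R{\o}rdam's algebra is simple and contains a properly infinite projection, which is full, and every trace vanishes on a properly infinite projection, hence on the whole algebra. So finiteness in these algebras is never witnessed by a trace; that is precisely why R{\o}rdam needs Villadsen-type Euler class obstructions. Moreover, finiteness of a projection is \emph{not} stable under passing to reduced crossed products --- the paper explicitly contrasts its result with \cite{GG}, where tracially amenable actions of groups containing free subgroups on $\cZ$-stable simple algebras always have purely infinite simple crossed products. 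The actual proof must re-run the comparison obstruction equivariantly over the entire $(\IZ\times\Gamma)$-orbit of the candidate finite projection: one builds an indexed family $\cI(\fs,\fk)$ of finite subsets of $\IN$ over words in $S\times\IZ$, constructs an injective selection map $N$ with $N(\omega)\in\cI(\omega)$, applies Proposition 4.5 of \cite{Ror} to get $\delta_e\otimes 1_{C(Z)} \not\precsim \bigoplus_{(g,n)\in\Gamma\times\IN}\beta_g(\varphi^n(\delta_e\otimes p_0))$, and then feeds this into the regular-representation lemma (Lemma 6.4 of \cite{Ror}, adapted to arbitrary groups) applied to $\IZ\times\Gamma$. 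This is identified in the paper as the hardest step, and your proposal has no substitute for it.

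Your proposed ``tensor product trick decoupling the $\Gamma$-dynamics from the R{\o}rdam dynamics'' is also structurally doomed if it produces a tensor decomposition of the final algebra: by Kirchberg's dichotomy, a simple exact tensorially non-prime \Cs-algebra is either stably finite or purely infinite, so it cannot satisfy (ii); indeed the paper stresses that its examples are tensorially \emph{prime}. The actual construction does the opposite of decoupling: it entangles the two dynamics in a single endomorphism $\psi(f)_g=\bigoplus_{(s,j)\in S\times\IZ}\psi_{s,j}(f_{s^{-1}g})$ on $C=C_0(\Gamma\times Z)\otimes\IK$, simultaneously translating along a generating set $S$ and twisting by Bott projections; this mixing is what makes $D$ have no $\sigma$-invariant ideals (hence $A=D\rca{\sigma}\IZ$ simple) and makes the obstruction cover all $\Gamma$-translates. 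Amenability comes from the central equivariant copy of $c_0(\Gamma)\subset\cM(C)$, not from importing actions of \cite{Suzf} as a black box. Three further errors: pointwise outerness does not ``pass through equivariant inductive limits'' --- in the paper it is again derived from the comparison obstruction ($\mu_0(\delta_e\otimes p_0)\not\sim\alpha_g(\mu_0(\delta_e\otimes p_0))$); the UCT for $A\rca{\alpha}\Gamma$ does not follow from amenability for a general (possibly non-exact) countable $\Gamma$ --- the paper obtains it from type I approximations of $D\rca{\gamma}\Gamma$ via the split equivariant extensions with quotient $C\rca{\beta}\Gamma\cong\IK(\ell^2(\Gamma))\otimes C(Z)\otimes\IK$; and the unital free-group case cannot invoke ``unital amenable actions from \cite{Suzf}'' since, as the paper's footnote notes, those examples are non-unital for essential reasons --- instead one cuts to the corner $pAp$ by a properly infinite projection with $[p]_0=0$, uses \cite{Cun} to choose unitaries $u_s$ with $u_s\alpha_s(p)u_s^\ast=p$ along the free basis, and uses freeness to define the perturbed action $\hat\alpha$ with $pAp\rca{\hat\alpha}\Gamma\cong p(A\rca{\alpha}\Gamma)p$.
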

The actions in Theorem \ref{Thm:1} have the following very different feature from
amenable actions on simple \Cs-algebras of the first kind \cite{Suzeq}, as well as the model actions in \cite{GS2}, that is, the infinite tensor product actions.
Recall that a simple \Cs-algebra $A$ is said to be \emph{tensorially prime} if $A\cong B \otimes C$ implies one of $B$ or $C$ is of type I.
Kirchberg's dichotomy (\cite{Rorbook}, Theorem 4.1.1) states that a simple exact tensorially non-prime \Cs-algebra
is either stably finite or purely infinite. 
By this dichotomy, the above theorem in particular gives
the first examples of amenable actions (of a non-amenable group) on tensorially prime simple \Cs-algebras.
This would be notable, as amenable \Cs-dynamical systems of exact groups
have rich dynamical systems on the central sequence algebras at least on the operator system level; cf.~characterizations of amenability of \Cs-dynamical systems in \cite{Suz21}, Theorem C and \cite{OS}, Section 4 (see also Remark \ref{Rem:central}).

It would be worth to compare the condition (\ref{item:c}) in Theorem \ref{Thm:1} (see also Theorem \ref{Thm:2} below)
with the following two results on the reduced crossed products of a discrete group $\Gamma$.

\begin{itemize}
\item
If $A$ is purely infinite simple and $\alpha\colon \Gamma \acts A$
is pointwise outer, then $A\rca{\alpha} \Gamma$ is again purely infinite simple.
This easily follows from the proof of Kishimoto's simplicity theorem \cite{Kis}, see e.g., \cite{SuzRig}, Lemma 6.3 for details.

\item 
If $A$ is unital, $\cZ$-stable, simple, nuclear; $\Gamma$ contains a non-commutative free group;
and the action $\alpha \colon \Gamma \acts A$ is pointwise outer and tracially amenable (a condition weaker than amenability),
then $A\rca{\alpha} \Gamma$ is purely infinite simple (\cite{GG}, Theorem C).
\end{itemize}
Compared to these facts, Theorem \ref{Thm:1} shows that when the underlying simple \Cs-algebra is ill-behaved,
the reduced crossed product can also be ill-behaved,
independent of the analytic property of the action and the combinatorial structure of the acting group.
This reveals a new insight on the classifiability of the reduced crossed products.

When $\Gamma$ is exact, a variant of the construction in Section 5 in \cite{Ror} also gives amenable actions on unital simple ill-behaved \Cs-algebras.
Note that by Corollary 3.6 of \cite{OS}, the exactness assumption cannot be removed.
\begin{Thm}\label{Thm:2}
Let $\Gamma$ be a countable exact group.
Then there is an amenable pointwise outer action $\alpha \colon \Gamma \acts A$ with the following properties.
\begin{enumerate}[\upshape(i)]
\item\label{item:d} $A$ is unital, simple, separable, and non-exact.
\item\label{item:e} $A$ is properly infinite, and contains a nonzero finite projection.
\item\label{item:f} $A\rca{\alpha} \Gamma$ also satisfies the properties in {\rm (\ref{item:d})} and {\rm (\ref{item:e})}.
\end{enumerate}
\end{Thm}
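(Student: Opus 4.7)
The plan is to run an equivariant version of R{\o}rdam's inductive-limit construction from Section 5 of \cite{Ror} in the category of \Cs-dynamical systems, using the amenable-action machinery developed in \cite{Suzf} to guarantee amenability at each stage. Concretely, I would produce a sequence $(A_n, \alpha_n)$ of $\Gamma$-\Cs-dynamical systems with equivariant connecting $\ast$-homomorphisms so that the inductive limit $(A,\alpha) = \varinjlim (A_n,\alpha_n)$ carries an amenable, pointwise outer $\Gamma$-action while $A$ is of the type that R{\o}rdam produces.

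The R{\o}rdam building blocks $A_n$ must be arranged so that (a) each $A_n$ carries a distinguished nonzero projection $p_n$ together with a faithful tracial state on the corner $p_n A_n p_n$, and the $p_n$ are preserved by the connecting maps; (b) the unit is made properly infinite by Cuntz-type isometries inserted at each stage; (c) a non-exact sub-\Cs-algebra (a copy of $\Cs_{\mathrm{r}}(\IF_\infty)$ or an analogous non-exact building block) is embedded to yield non-exactness. Simplicity is then extracted from the limit by a standard saturation/diagonal argument. In my variant, the $\Gamma$-action at each stage fixes $p_n$ and restricts to a tracial-state preserving action on $p_n A_n p_n$. The $\Gamma$-equivariant structure together with amenability is supplied by the method of \cite{Suzf}: exactness of $\Gamma$ provides an amenable \Cs-model system, which is built into each block in a way that amenability is inherited by the equivariant inductive limit. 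Pointwise outerness is arranged by absorbing a pointwise outer model into each $A_n$, again as in \cite{Suzf}.

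Once $(A,\alpha)$ is assembled, the properties of $A$ follow from R{\o}rdam's analysis. For the crossed product: non-exactness passes from $A$ upward automatically (exactness is inherited by sub-\Cs-algebras and $A \subseteq A\rca{\alpha}\Gamma$), and proper infiniteness of the shared unit is preserved since the isometries witnessing it remain in $A\rca{\alpha}\Gamma$; simplicity follows from simplicity of $A$, pointwise outerness and amenability of $\alpha$ via the amenable/Kishimoto-type simplicity argument (cf.~\cite{SuzRig}, Lemma 6.3).

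The main obstacle is producing a nonzero finite projection in $A\rca{\alpha}\Gamma$. The strategy is: since $p := \lim p_n$ is $\alpha$-fixed and the restriction $\alpha|_{pAp}$ preserves a tracial state $\tau$, one has a canonical identification
\[
p(A\rca{\alpha}\Gamma)p \;\cong\; (pAp)\rca{\alpha|_{pAp}}\Gamma,
\]
and $\tau \circ E$ (where $E$ is the canonical conditional expectation) gives a tracial state on this corner; hence $p$ is finite in $A\rca{\alpha}\Gamma$. The delicate point is to carry out the R{\o}rdam inductive construction and the \cite{Suzf} construction \emph{simultaneously} without destroying the $\Gamma$-invariance of $p_n$ or the invariance of $\tau$ on $p_n A_n p_n$; one must choose, at each stage, the new Cuntz-type isometries, the embedded non-exact blocks, and the equivariant connecting maps in such a way that the finite corner is left stable and trace-invariant under the action. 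This compatibility between the three ingredients (R{\o}rdam's creation of a finite corner inside a properly infinite algebra, the \cite{Suzf} amenability machinery, and $\Gamma$-equivariance) is where the technical work is expected to concentrate.
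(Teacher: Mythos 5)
There is a genuine gap, and it is fatal: your mechanism for producing a nonzero finite projection in $A\rca{\alpha}\Gamma$ cannot exist. You propose to keep a $\Gamma$-invariant faithful tracial state $\tau$ on the corner $pAp$ and conclude finiteness of $p$ via $\tau\circ E$ on $p(A\rca{\alpha}\Gamma)p\cong (pAp)\rca{}\Gamma$. But in \emph{any} unital simple \Cs-algebra $A$ that is properly infinite (conditions (i)--(ii) of the theorem), no nonzero projection $q$ can support a tracial state on $qAq$: by simplicity $q$ is full, so $1_A\precsim q^{\oplus n}$ in $\IM_n(A)$ for some $n$, and a tracial state on $qAq$ (automatically faithful, since its kernel would be a nonzero trace-null ideal in the simple algebra $\IM_n(qAq)$) would assign the projection equivalent to $1_A$ a finite nonzero value; proper infiniteness of $1_A$ gives $1_A\oplus 1_A\precsim 1_A$, forcing $2t\leq t$ with $0<t<\infty$, a contradiction. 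So the ``delicate compatibility'' you flag at the end --- keeping the finite corner trace-invariant through the equivariant R{\o}rdam construction --- is not a technical difficulty to be overcome but an impossibility; the same obstruction already kills it at the non-equivariant level of R{\o}rdam's algebra, where indeed $1\precsim p_0\oplus p_0$ (relation (\ref{eq:trivial})) makes $\IM_2(p_0Ap_0)$ infinite.

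The paper's finiteness mechanism is entirely different and trace-free: it is the Euler-class comparison obstruction over $Z=(\mathbb{S}^2)^{\IN}$ (Proposition \ref{Prop:Ror}), which yields $1_{C(Z)}\not\precsim\bigoplus_\lambda p_{I_\lambda}$ for suitable indexed families, combined with Lemma \ref{Lem:fp}: if $p\precsim q\oplus q$ but $p\not\precsim \bigoplus_{g}\gamma_g(q)$ in $\cM(C\otimes\IK)$, then $q$ is not properly infinite in the crossed product; simplicity plus Cuntz's theorem \cite{Cun} (infinite implies properly infinite in simple \Cs-algebras) then upgrades this to finiteness of $q=\mu_0(\delta_e\otimes p_0)$. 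The same pair of relations (\ref{equation:vn1}), (\ref{equation:vn2}) also delivers pointwise outerness, which you instead wave at via ``absorbing a pointwise outer model.'' The global architecture also differs from your equivariant inductive system of separable tracial blocks: the paper runs the Theorem \ref{Thm:1} machine with $S=\Gamma$ to get one non-separable simple system $\Gamma\acts B=\varinjlim(\cM(C),\varphi)$, amenable because $\Gamma$ is exact and $\ell^\infty(\Gamma)$ sits centrally in $\cM(C)$ (Ozawa \cite{Oz}), non-exact because $\cM(\IK)\subset B$, and then cuts down to a separable model by alternating two separabilization lemmas --- Lemma \ref{Lem:sep} (amenable separable subsystems) and Blackadar's simple-subalgebra lemma \cite{Bla} --- so that the union is unital, separable, simple, non-exact, properly infinite, with amenable action, while the key comparison relations survive in the subalgebra. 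Your proposal gets the soft points right (non-exactness passes to the crossed product since $A\subset A\rca{\alpha}\Gamma$; proper infiniteness of the unit persists; Kishimoto gives simplicity, for which amenability is in fact not needed), but without replacing the trace argument by a comparison obstruction of R{\o}rdam type, the crucial clause --- a nonzero finite projection in $A\rca{\alpha}\Gamma$ --- remains unproved and your intended route to it is provably blocked.
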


We point out that the theorems in particular give
the first examples of amenable actions (of non-amenable groups) on unital simple \Cs-algebras without pure infiniteness\footnote{Amenable actions on stably finite simple \Cs-algebras
are constructed in \cite{Suzf}, \cite{SuzP}. However currently all these examples are non-unital
by essential reasons.}.
They also provide a simple \Cs-algebra of the exceptional type (which can be arranged to be unital, separable, and nuclear)
with a non-trivial simple regular \Cs-subalgebra.
To the knowledge of the author, such examples were not known before.
Also, these particular crossed product presentations would be helpful to understand inclusions involving such \Cs-algebras; cf.~ \cite{SuzMAAN}.
\section{Preliminaries}
\subsection*{Notations}
We basically follow the notations of \cite{Ror}.
However we include $0$ in $\IN$
because it is more natural and convenient for our purpose.
This causes shifts of some indices in \cite{Ror}
(but certainly this does not require essential changes).

Let $\mathbb{S}^2$ denote the $2$-sphere.
Set $Z:=(\mathbb{S}^2)^{\IN}$.
For $n\in \IN$, let $\pi_n \colon Z \rightarrow \mathbb{S}^2$ be the $n$th coordinate projection.
Denote by $p\in \IM_2(C(\mathbb{S}^2))$ the Bott projection (see \cite{BlaK}, 9.2.10).
For $n\in \IN$, put $p_n:=\pi_n^\ast(p) \in \IM_2(C(Z))$.
For a non-empty finite subset $I\subset \IN$, let
$\Delta_I \colon Z \rightarrow Z^I$ denote the diagonal embedding.
Then, by identifying $\bigotimes_{i\in I} \IM_2$ with $\IM_{2^{|I|}}$ via a fixed isomorphism, we define
\[p_I:= \Delta_I^\ast(\bigotimes_{i\in I} p_i) \in \IM_{2^{|I|}} (C(Z)).\]

Put $\IK:=\IK(\ell^2(\IN\setminus \{0\}))$.
For a \Cs-algebra $A$ and $n \geq 1$, we identify the matrix amplification $\IM_n(A)$ with the corner $A\otimes \IK(\ell^2(\{1, \ldots, n\}))$ of $A \otimes \IK$ via the obvious isomorphism.
In particular we identify $A$ with the corner $A\otimes \IK(\ell^2(\{1\}))$.

For a stable \Cs-algebra $A$ (that is, $A \cong A\otimes \IK$) and a countable (non-empty) index set $\Lambda$,
we fix a family $(V_\lambda)_{\lambda\in \Lambda}$ of isometry elements in the multiplier algebra $\cM(A)$ with
 \[\sum_{\lambda \in \Lambda} V_\lambda V_\lambda^\ast =1\]
 in the strict topology.
Then, for
a bounded indexed family $(a_\lambda)_{\lambda\in \Lambda}$ in $\cM(A)$,
 we define
 \begin{equation} \label{eq:DSe}\bigoplus_{\lambda \in \Lambda} a_\lambda:=\sum_{\lambda\in \Lambda} V_\lambda a_\lambda V_\lambda^\ast \in \cM(A).
 \end{equation}
 This defines a unital $\ast$-homomorphism
 \begin{equation}\label{eq:DS}
 \bigoplus_{\lambda\in \Lambda} \colon \ell^\infty(\Lambda, \cM(A))\rightarrow \cM(A).
 \end{equation}
 Note that the unitary conjugacy class of the $\ast$-homomorphism in (\ref{eq:DS}) is independent of the choice of $(V_\lambda)_{\lambda \in \Lambda}$.
 Indeed if we employ another family $(W_\lambda)_{\lambda \in \Lambda}$,
 then the unitary element $U:=\sum_{\lambda\in \Lambda} W_\lambda V_\lambda^\ast \in \cM(A)$ witnesses the unitary conjugacy.
 In particular, for an indexed family $(p_\lambda)_{\lambda \in \Lambda}$ of projections in $\cM(A)$,
 the Murray--von Neumann equivalence class of $\bigoplus_{\lambda \in \Lambda} p_{\lambda} \in \cM(A)$ is independent of the choice of $(V_\lambda)_{\lambda \in \Lambda}$.
 We also note that after identifying $\ell^\infty(\Lambda, \cM(A))$ with $\cM(c_0(\Lambda) \otimes A)$ in the canonical way, the $\ast$-homomorphism in (\ref{eq:DS}) is strictly continuous.
\subsection*{Facts}
Villadsen's invention \cite{Vil1}, \cite{Vil2} is to combine Euler class obstructions together with the inductive limit construction
to produce ill-behaved simple (nuclear) \Cs-algebras.
Until now, basically this is the only known approach to produce counter-examples \cite{Ror}, \cite{Tom} of the original Elliott conjecture.

For two projections $p, q$ in a \Cs-algebra $A$, we write $p\sim q$ when they are Murray--von Neumann equivalent.
We write $p \precsim q$ when there is a projection $q_0\leq q$ with $p \sim q_0$.
A projection $p\in A$ is said to be properly infinite if $p\oplus p \precsim p$ in $A\otimes \IK$.
A projection $p\in A$ is said to be finite if it is not infinite, that is,
if there is no proper subprojection $p_0 \leq p$ with $p_0 \sim p$.

Based on Euler class obstructions, R{\o}rdam  \cite{Ror} observed the following criterion on the Murray--von Neumann comparison relation.
\begin{Prop}[\cite{Ror}, Proposition 4.5]\label{Prop:Ror}
Let $(I_\lambda)_{\lambda\in \Lambda}$ be a $($countable$)$ indexed family\footnote{We point out that the sets of finite subsets of $\IN$ appearing in \cite{Ror} e.g., $\Gamma_n$, $\Gamma$ should be replaced by \emph{indexed} families (or \emph{multisets}) of finite subsets, due to the possibility of overlaps. Cf.~the proof of Theorem \ref{Thm:1} below.} 
of finite subsets of $\IN$ with the following property:
There is an injective map $N \colon \Lambda \rightarrow \IN$ with $N(\lambda) \in I_\lambda$ for $\lambda \in \Lambda$.
Then we obtain the relation
\[1_{C(Z)}\not\precsim \bigoplus_{\lambda \in \Lambda} p_{I_\lambda} \quad {\rm in}\quad \cM(C(Z)\otimes \IK).\]
\end{Prop}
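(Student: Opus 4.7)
My plan is to argue by contradiction, using the standard top Chern-class obstruction underlying Villadsen-type constructions. The argument splits into two parts: a reduction to a finite subfamily $F\subset \Lambda$ so that everything lives inside a matrix algebra over $C(Z)$, followed by a Chern-class computation in $H^\ast(Z,\IZ)$.

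For the reduction, suppose $1_{C(Z)}\precsim q:=\bigoplus_{\lambda\in\Lambda}p_{I_\lambda}$ in $\cM(C(Z)\otimes \IK)$. A witnessing partial isometry may be viewed as a vector $\xi$ in the Hilbert $C(Z)$-module attached to $q$, with $\langle\xi,\xi\rangle=1_{C(Z)}$. Decomposing $\xi=\sum_\lambda V_\lambda\eta_\lambda$ with $\eta_\lambda$ in the range of $p_{I_\lambda}$, the identity $\sum_\lambda\langle\eta_\lambda,\eta_\lambda\rangle=1_{C(Z)}$ converges in the norm of $C(Z)$, so one can choose a finite $F\subset\Lambda$ such that $a_F:=\sum_{\lambda\in F}\langle\eta_\lambda,\eta_\lambda\rangle$ is invertible in $C(Z)$. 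Then $\xi_F:=\big(\sum_{\lambda\in F}V_\lambda\eta_\lambda\big)\cdot a_F^{-1/2}$ is a unit vector in the range of $q_F:=\bigoplus_{\lambda\in F}V_\lambda p_{I_\lambda}V_\lambda^\ast$, showing $1_{C(Z)}\precsim q_F$, and hence $1_{C(Z)}\precsim P_F:=\bigoplus_{\lambda\in F}p_{I_\lambda}$ inside a matrix algebra over $C(Z)$.

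For the obstruction, $P_F$ is a rank-$|F|$ vector bundle on $Z$, and $1_{C(Z)}\precsim P_F$ yields a splitting $P_F\cong\underline{\IC}\oplus P'_F$ with $P'_F$ of rank $|F|-1$. By Whitney's formula, $c(P_F)=c(\underline{\IC})\cdot c(P'_F)=c(P'_F)$, so $c_{|F|}(P_F)=c_{|F|}(P'_F)=0$ since Chern classes above the rank vanish. On the other hand, $p_{I_\lambda}=\Delta_{I_\lambda}^\ast(\bigotimes_{i\in I_\lambda}p_i)$ represents the internal tensor product of the line bundles $L_i:=\pi_i^\ast(\mathrm{Bott})$ on $Z$, so $c_1(p_{I_\lambda})=\sum_{i\in I_\lambda}\beta_i$, where $\beta_i:=\pi_i^\ast(\beta)\in H^2(Z,\IZ)$ and $\beta\in H^2(\mathbb{S}^2,\IZ)$ is the Bott generator. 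Multiplicativity of the top Chern class of a Whitney sum of line bundles gives
\[c_{|F|}(P_F)=\prod_{\lambda\in F}c_1(p_{I_\lambda})=\prod_{\lambda\in F}\sum_{i\in I_\lambda}\beta_i\in H^{2|F|}(Z,\IZ).\]
Since $H^\ast(Z,\IZ)\cong\IZ[\beta_i:i\in\IN]/(\beta_i^2)$, expanding the product kills every term involving a repeated index; the surviving contributions come from injective assignments $\phi\colon F\hookrightarrow\IN$ with $\phi(\lambda)\in I_\lambda$, each giving $+\beta_{\phi(F)}$. The restriction $N|_F$ is one such $\phi$, so the coefficient of the square-free monomial $\beta_{N(F)}$ is a strictly positive integer; since distinct square-free monomials are $\IZ$-linearly independent in $H^\ast(Z,\IZ)$, the product is nonzero, contradicting $c_{|F|}(P_F)=0$.

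I expect the reduction step to be the more delicate part: one must carefully pass from the strict-topology presentation of $q$ in the multiplier algebra to a finitely-supported situation where the Chern-class machinery applies. The cohomological part is then routine, and the injectivity of $N$ is used precisely to guarantee a positive coefficient for the single monomial $\beta_{N(F)}$, ruling out accidental cancellation among the injective assignments.
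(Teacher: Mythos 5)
Your proposal is correct, and it follows essentially the same route as the argument the paper invokes from R{\o}rdam \cite{Ror}: your reduction to a finite subfamily $F$ is exactly the content of Lemma~4.4 there (the paper's Remark~\ref{Rem:finite}), and your top Chern-class computation $c_{|F|}\bigl(\bigoplus_{\lambda\in F}p_{I_\lambda}\bigr)=\prod_{\lambda\in F}\sum_{i\in I_\lambda}\beta_i$ in $H^\ast(Z,\IZ)\cong\IZ[\beta_i]/(\beta_i^2)$, with the injectivity of $N$ guaranteeing a strictly positive coefficient for the square-free monomial $\prod_{\lambda\in F}\beta_{N(\lambda)}$, is precisely the Euler-class obstruction underlying \cite{Ror}, Proposition~4.5.
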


Also, for any $n\in \IN$, one has
\begin{equation}\label{eq:trivial}
1_{C(Z)}\precsim p_n \oplus p_n
\end{equation}
in $C(Z) \otimes \IK$
because $1_{C(\mathbb{S}^2)} \precsim p \oplus p$ in $C(\mathbb{S}^2)\otimes \IK$.
 These two special properties of the projections $p_I$ are critical in R{\o}rdam's constructions \cite{Ror}.
 
 The next lemma is a version of the key observation, Lemma 6.4 in \cite{Ror}.
 In \cite{Ror}, it is stated for $\IZ$-actions, but the same proof gives the following version.
 For completeness, we give a proof.
 \begin{Lem}\label{Lem:fp}
 Let $C$ be a \Cs-algebra. Let $\gamma \colon \Gamma \acts C$ be an action.
 Assume that two projections $p, q\in C$ satisfy the relations
\begin{equation}\label{equation:1}p\precsim q\oplus q,
\end{equation}
 \begin{equation}\label{equation:2}
 p\not\precsim \bigoplus_{g\in \Gamma} \gamma_g(q)\end{equation}
 in $\cM(C\otimes \IK)$.
 Then $q$ is not properly infinite in $C\rca{\gamma} \Gamma$.
 \end{Lem}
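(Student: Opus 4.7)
The plan is to argue by contradiction: I would suppose $q$ is properly infinite in $B := C \rca{\gamma} \Gamma$ and derive $p \precsim \bigoplus_{g \in \Gamma} \gamma_g(q)$ in $\cM(C \otimes \IK)$, violating (\ref{equation:2}). The main device will be the regular covariant representation of $B$ on the Hilbert $C$-module $C \otimes \ell^2(\Gamma)$, which converts the single projection $q \in C$ into the ``dynamical direct sum'' $\bigoplus_g \gamma_g(q)$ inside $\cM(C \otimes \IK)$.

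\smallskip
\noindent\textbf{Steps.} First I would combine hypothesis (\ref{equation:1}) with proper infiniteness $q \oplus q \precsim q$ in $B \otimes \IK$ to obtain $p \precsim q$ in $\cM(B \otimes \IK)$; here the inclusion $\cM(C \otimes \IK) \hookrightarrow \cM(B \otimes \IK)$ is legitimate because $C \subset B$ is non-degenerate (any approximate unit $(e_\lambda)$ of $C$ also approximates $B$, since $u_g e_\lambda = \gamma_g(e_\lambda) u_g \to u_g$ by covariance). Second, I would introduce the regular covariant representation $\Phi \colon B \to \mathcal{L}_C(C \otimes \ell^2(\Gamma)) \hookrightarrow \cM(C \otimes \IK)$ defined by $\Phi(c)(c' \otimes \delta_g) = \gamma_{g^{-1}}(c) c' \otimes \delta_g$ and $\Phi(u_h)(c' \otimes \delta_g) = c' \otimes \delta_{hg}$. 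The key observation is that for any projection $r \in C$, $\Phi(r)$ equals $\sum_{g \in \Gamma} \gamma_{g^{-1}}(r) \otimes P_g$ with $(P_g)_{g \in \Gamma}$ mutually orthogonal rank-one projections; by the same unitary-conjugacy argument used to justify $\bigoplus$ after (\ref{eq:DS}), this projection is Murray--von Neumann equivalent in $\cM(C \otimes \IK)$ to $\bigoplus_g \gamma_{g^{-1}}(r)$, and hence to $\bigoplus_g \gamma_g(r)$ after reindexing $g \mapsto g^{-1}$. In particular $r \precsim \Phi(r)$, via the $g = e$ summand. Finally, after extending to $\Phi \otimes \mathrm{id}_\IK \colon B \otimes \IK \to \cM(C \otimes \IK) \otimes \IK \hookrightarrow \cM(C \otimes \IK)$ using $\IK \otimes \IK \cong \IK$, I would transfer $p \precsim q$ to $\Phi(p) \precsim \Phi(q)$ and chain
\[
p \precsim \Phi(p) \precsim \Phi(q) \sim \bigoplus_{g \in \Gamma} \gamma_g(q),
\]
which contradicts (\ref{equation:2}).

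\smallskip
\noindent\textbf{Main obstacle.} The hard part will be the equivalence $\Phi(r) \sim \bigoplus_g \gamma_g(r)$ in $\cM(C \otimes \IK)$: the definition of $\bigoplus$ uses isometries $(V_g)$ with full range, whereas $\Phi(r)$ is built from rank-one ``slots'' $P_g$. I would handle this by writing $\Phi(r) = \sum_g S_g \gamma_{g^{-1}}(r) S_g^*$ with $S_g := \gamma_{g^{-1}}(r) \otimes s_g$ (where $s_g \in \IK$ is a rank-one partial isometry with $s_g^* s_g = e_{11}$ and $s_g s_g^* = P_g$), and $\bigoplus_g \gamma_{g^{-1}}(r) = \sum_g T_g \gamma_{g^{-1}}(r) T_g^*$ with $T_g := V_g \gamma_{g^{-1}}(r)$; both representations satisfy $S_g^* S_g = T_g^* T_g = \gamma_{g^{-1}}(r)$ and have mutually orthogonal range projections, so $U := \sum_g T_g S_g^* \in \cM(C \otimes \IK)$ is a partial isometry witnessing the equivalence. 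An ancillary routine step is the identification of $\mathcal{L}_C(C \otimes \ell^2(\Gamma))$ as a subalgebra of $\cM(C \otimes \IK)$, which is immediate when $\Gamma$ is infinite and requires a further stabilization when $\Gamma$ is finite.
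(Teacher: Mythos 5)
Your proposal is correct and takes essentially the same route as the paper's own proof: both rest on the left regular covariant representation $\Phi \colon C \rca{\gamma} \Gamma \rightarrow \cM(C \otimes \IK(\ell^2(\Gamma)))$ together with the equivalence $\Phi(q) \sim \bigoplus_{g\in\Gamma}\gamma_g(q)$, which you in fact verify in more detail (the explicit partial isometry $U=\sum_g T_g S_g^\ast$) than the paper does. The only difference is organizational: the paper stays inside $\cM(C\otimes\IK(\ell^2(\Gamma)))$, comparing $p\otimes r$ (with $r$ a minimal projection) against $\Phi(q)\oplus\Phi(q)$ and $\Phi(q)$, whereas you first transport the hypotheses into $\cM((C\rca{\gamma}\Gamma)\otimes\IK)$ via the non-degenerate inclusion before applying $\Phi$ --- both arrangements produce the same contradiction with (\ref{equation:2}).
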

We note that
an infinite projection in a simple \Cs-algebra is automatically properly infinite \cite{Cun} (see also \cite{BlaK}, Proposition 6.11.3).
 Hence, when $C\rca{\gamma}\Gamma$ is simple, Lemma \ref{Lem:fp} in particular gives a sufficient condition for the finiteness of $q$.
\begin{Rem}\label{Rem:finite}
The condition in (\ref{equation:2}) is equivalent to the following condition:
 For any finite subset $F\subset \Gamma$, one has
\[p\not\precsim \bigoplus_{g\in F} \gamma_g(q)\]
in $C\otimes \IK$. See Lemma 4.4 in \cite{Ror}.
\end{Rem}

\begin{proof}[Proof of Lemma \ref{Lem:fp}]

Let $(e_{g, h})_{g, h\in \Gamma}$ denote the matrix unit of $\IK(\ell^2(\Gamma))$ given by $e_{g, h}(\delta_k)=\delta_{h, k} \cdot \delta_g$ for $g, h, k\in \Gamma$.
Then one has an embedding
\[\Phi \colon C\rca{\gamma} \Gamma \rightarrow \cM(C\otimes \IK(\ell^2(\Gamma)))\]
given by
\[\Phi(cs):=\sum_{g\in \Gamma} \gamma^{-1}_g(c)\otimes e_{g, s^{-1}g}\]
for $c\in C$ and $s\in \Gamma$.
(To see that $\Phi$ is indeed a well-defined embedding of the reduced crossed product,
observe that any faithful $\ast$-representation $\pi \colon C \rightarrow \IB(\fH)$ on a Hilbert space $\fH$
gives rise to the embedding $\tilde{\pi} \colon \cM(C\otimes \IK(\ell^2(\Gamma))) \rightarrow \IB(\fH\otimes \ell^2(\Gamma))$
given by $\tilde{\pi}(c \otimes x)=\pi(c)\otimes x$ for $c\in C$, $x\in \IK(\ell^2(\Gamma))$.
Then, the composite $\tilde{\pi} \circ \Phi$ is equal to the left regular covariant representation of $\gamma \colon \Gamma \acts C$ associated to $\pi$.
Thus $\tilde{\pi} \circ \Phi$, hence $\Phi$, is an embedding.)

Let $r \in \IK(\ell^2(\Gamma))$ be a minimal projection.
Then, because
\[\Phi(q)\sim \bigoplus_{g\in \Gamma} \gamma_g(q),\]
 the relations (\ref{equation:1}) and (\ref{equation:2}) in the assumption respectively imply
\[p\otimes r \precsim \Phi(q)\oplus \Phi(q), \quad p\otimes r \not\precsim \Phi(q)\]
 in $\cM(C\otimes \IK(\ell^2(\Gamma)))$.
These two relations force 
\[\Phi(q)\oplus \Phi(q) \not\precsim \Phi(q) \quad {\rm in}~ \cM(C\otimes \IK(\ell^2(\Gamma))),\]
whence $q\oplus q \not \precsim q$ in $(C\rca{\gamma}\Gamma)\otimes \IK$.
\end{proof}
For a subset $\mathcal{S}\subset A$ of a \Cs-algebra $A$,
we write $\ide{\mathcal{S}}{A}$ for the (closed, self-adjoint) ideal of $A$ generated by $\mathcal{S}$.
Recall that an element $a \in A$ is said to be full in $A$ if it satisfies $\ide{\{a\}}{A}=A$.
More generally, for a subset $\mathcal{S} \subset \cM(A)$, we set
$\ide{\mathcal{S}}{A}:=\ide{\mathcal{S}}{\cM(A)} \cap A$ $(=\ide{\mathcal{S}\cdot A}{A})$.

 For basic facts on amenable actions on non-commutative \Cs-algebras,
 we refer the reader to \cite{OS}.
 
 \section{Ingredients of the constructions}\label{section:ing}
In this section, we introduce the main ingredients of the construction of the desired actions in Theorem \ref{Thm:1}.
Most of them will be also used in the proof of Theorem \ref{Thm:2}.
\subsection*{Ingredients of our construction}
\begin{description}

\item[A sequence $\Lambda_n\subset \IN$]
We pick pairwise disjoint subsets $(\Lambda_n)_{n\in \IN}$ of $\IN$ with $0\in \Lambda_0$ and $|\Lambda_n|=|\IN|$ for $n \geq 1$.

\item[A generating subset $S\subset \Gamma$]
Choose a subset $S\subset \Gamma$ which generates $\Gamma$ as a semigroup with $e\in S$.
(Unlike the construction in \cite{Suzf}, we do not require the finiteness of $S$, which was necessary to produce a tracial weight.
In particular one can choose $S=\Gamma$.
This particular choice will be employed in the proof of Theorem \ref{Thm:2}.)

\item[A map $\nu \colon S \times \IZ \times \IN \rightarrow \IN$]
We pick an injective map
\[\nu\colon S \times \IZ \times \IN \rightarrow \IN\]
satisfying $\nu(S \times \IZ \times \Lambda_n)\subset \Lambda_{n+1}$ for $n\in \IN$.
\item[Subsets $I(s, n) \subset \IN$ indexed by $S\times \IN$]
For $(s, n)\in S\times \IN$, we set
\[I(s, n):=\nu(s, n, \{0, \ldots, n\}) \subset \IN.\]

\item[A dense sequence $((c_{j, n})_{n\in \IN})_{j\in \IN}$ in $Z$]
We fix a dense sequence $((c_{j, n})_{n\in \IN})_{j\in \IN}$ in $Z=(\mathbb{S}^2)^\IN$ (so each $c_{j, n}$ is in $\mathbb{S}^2$).

\item[Continuous maps $\rho_{s, j}\colon Z \rightarrow Z$ indexed by $S\times \IZ$]
For $(s, n)\in S\times \IZ$ and $z=(x_n)_n \in Z=(\mathbb{S}^2)^\IN$, we define $\rho_{s, j}(z)\in Z$ as follows. For $n\in \IN$, we define its $n$th entry to be
\begin{align}\label{eq:rho}
\rho_{s, j}(z)_n:=\left\{
\begin{array}{ll}
x_{\nu(s, j, n)} & \text{if } n > j,\\
c_{j, n} & \text{if } n \leq  j.
\end{array}
\right.
\end{align}
Clearly each $\rho_{s, j}$ is continuous.
Note that the second case in (\ref{eq:rho}) does not occur when $j<0$.

\item[$\ast$-endomorphisms $\psi_{s, j}$ on $C(Z)\otimes \IK$ indexed by $S\times \IZ$]
We fix a $\ast$-isomorphism $\tau \colon \IK\otimes \IK \rightarrow \IK$.
Then, for each $(s, j)\in S\times \IZ$, we define a $\ast$-homomorphism
\[\psi_{s, j} \colon C(Z) \otimes \IK \rightarrow C(Z)\otimes \IK\]
as follows.
For $f\in C(Z)\otimes \IK$ and $z \in Z$, set
\begin{align}\label{eq:psisj}
\psi_{s, j}&(f)(z):=\left\{
\begin{array}{ll}
f(\rho_{s, j}(z))& \text{if } j <0,\\
\tau(f(\rho_{s, j}(z))\otimes p_{I(s, j)}(z)) & \text{if } j \geq 0.
\end{array}
\right.
\end{align}
\item[A subset $\kappa_{s, j}(I) \subset \IN$ for $s\in S, j \in \IZ, I \subset \IN$]
For any non-empty finite subset $I\subset \IN$ and any $(s, j)\in S\times \IZ$, we define
\[\kappa_{s, j}(I):= \left\{
\begin{array}{ll}
\nu(s, j, I)& \text{if } j < 0,\\
\nu(s, j, I \cup \{0, 1, \ldots, j\}) &\text{if } j \geq 0.
\end{array}
\right.\]
\end{description}
~\\

\begin{Rem}[Some properties of $\psi_{s, j}$]~\\

\begin{itemize}
\item
For any non-empty finite subset $I\subset \IN$ and $(s, j)\in S\times \IZ$,
 the relation
\begin{equation}\label{eq:MvN}
\psi_{s, j}(p_I)\sim p_{\kappa_{s, j}(I)}
\end{equation}
holds true in $C(Z) \otimes \IK$ by the definitions (\ref{eq:rho}) and (\ref{eq:psisj}) (see Lemma 5.4 of \cite{Ror} for details).
\item
By the density of $((c_{j, n})_{n\in \IN})_{j\in \IN}$ in $Z$,
for any $f\in C(Z)\otimes \IK$ and any $s\in S$,
one has infinitely many (non-negative) $j\in \IZ$ with
\begin{equation}\label{eq:full}
\min_{z\in Z}\|\psi_{s, j}(f)(z) \| \geq \frac{\|f\|}{2}.
\end{equation}
The inequality (\ref{eq:full}) is crucial to obtain simple \Cs-algebras in the construction below.

\item Each $\psi_{s, j}$ admits a strictly continuous extension
\[\psi_{s, j}\colon \cM(C(Z) \otimes \IK) \rightarrow \cM(C(Z) \otimes \IK).\]
Indeed, after identifying $\cM(C(Z)\otimes \IK)$ with the \Cs-algebra of all bounded, strictly continuous functions $f\colon Z\rightarrow \cM(\IK)$ in the canonical way,
the same formula as in (\ref{eq:psisj}) defines the desired extension.
\end{itemize}
\end{Rem}

\begin{description}
\item[The \Cs-algebras $C:=C_0(\Gamma\times Z)\otimes \IK$ and $\cM(C)$]
These two \Cs-algebras are the basic building blocks of our construction.
Throughout the article, we identify $\cM(C)$
with $\ell^\infty(\Gamma, \cM(C(Z)\otimes \IK))$ in the obvious way.

\item[An amenable action $\beta \colon \Gamma \acts C$]

We equip the \Cs-algebra $C$ with the right translation action $\beta\colon \Gamma \acts C$ as follows.
For $g\in \Gamma$ and $(f_h)_{h\in \Gamma} \in \cM(C)$, set
\begin{equation}\label{eq:beta}
\beta_g((f_h)_{h\in \Gamma}):=(f_{hg})_{h\in \Gamma}.
\end{equation}
For the multiplier algebra of a $\Gamma$-\Cs-algebra,
we equip it with the induced $\Gamma$-action.
Clearly the $\ast$-homomorphism $c_0(\Gamma) \rightarrow \cM(C)$ given by $\delta_g \mapsto \delta_g \otimes 1$; $g\in \Gamma$,
is non-degenerate and $\Gamma$-equivariant with respect to the right translation action, and its image is contained in the center.
Thus $\beta$ is (strongly) amenable.

\item[A $\ast$-homomorphism $\psi\colon C \rightarrow \cM(C)$] 
Define a $\ast$-homomorphism
\[\psi\colon C \rightarrow \cM(C)\]
as follows. For $f \in C$ and $g\in \Gamma$, set
\begin{equation}\label{eq:psi}
\psi(f)_g:= \bigoplus_{(s, j)\in S \times \IZ}
 \psi_{s, j}(f_{s^{-1}g}).
 \end{equation}
\end{description}
Here it is important to use the \emph{same} indexed family $(V_{s, j})_{(s, j)\in S\times \IZ}$ of isometry elements, independent of $g\in \Gamma$,
to define the right hand side of (\ref{eq:psi}) (see (\ref{eq:DSe})).
Indeed, thanks to this, $\psi$ is $\Gamma$-equivariant.

Observe that $\psi$ has a strictly continuous extension
\[\psi \colon \cM(C) \rightarrow \cM(C),\]
because each $\psi_{s, j}$ admits a strictly continuous extension, and hence the same formula as in (\ref{eq:psi}) (but with the extended $\psi_{s, j}$'s) defines the desired extension.

\section{Theorem \ref{Thm:1}}
With the ingredients in Section \ref{section:ing}, we now prove Theorem \ref{Thm:1}.

For a \Cs-algebra $A$, denote by $1_A$ the unit of $\cM(A)$.
\begin{proof}[Proof of Theorem \ref{Thm:1}]
We first show the following claim.
\begin{Cl}\label{Cl:1}
One has
$\psi(1_C) \sim 1_C$ in the fixed point algebra $\cM(C)^\Gamma$.
\end{Cl}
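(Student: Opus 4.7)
The plan is to identify $\cM(C)^\Gamma$ explicitly, translate both $1_C$ and $\psi(1_C)$ into concrete projections in $\cM(C(Z)\otimes \IK)$, and then conclude by a standard Murray--von Neumann comparison for properly infinite projections.

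Because $\beta$ acts on $\cM(C)=\ell^\infty(\Gamma,\cM(C(Z)\otimes \IK))$ by right translation of the $\Gamma$-coordinate (see (\ref{eq:beta})), its fixed points are exactly the constant families, and the diagonal embedding
\[\cM(C(Z)\otimes \IK)\ni a \longmapsto (a)_{g\in \Gamma} \in \cM(C)^\Gamma\]
is a $\ast$-isomorphism sending $1$ to $1_C$. Since $\psi$ is $\Gamma$-equivariant and $1_C$ is $\Gamma$-fixed, $\psi(1_C)$ also lies in $\cM(C)^\Gamma$, and evaluating (\ref{eq:psi}) at any $g\in \Gamma$ yields
\[\psi(1_C)\longleftrightarrow P:=\bigoplus_{(s,j)\in S\times \IZ}\psi_{s,j}(1).\]
Applying (\ref{eq:psisj}) to the unit (via its strictly continuous extension) gives $\psi_{s,j}(1)=1$ whenever $j<0$ and $\psi_{s,j}(1)=\tau(1\otimes p_{I(s,j)})$ whenever $j\geq 0$. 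The claim thus reduces to $P\sim 1$ in $\cM(C(Z)\otimes \IK)$, since any witnessing partial isometry lifts through the diagonal embedding to a $\Gamma$-fixed partial isometry in $\cM(C)^\Gamma$.

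For $P\sim 1$: the defining relation $\sum_{(s,j)}V_{s,j}V_{s,j}^*=1$ gives $P=\sum_{(s,j)}V_{s,j}\psi_{s,j}(1)V_{s,j}^*\leq 1$, so $P\precsim 1$. Conversely, any single isometry $V_{s_0,j_0}$ with $j_0<0$ satisfies $V_{s_0,j_0}^*V_{s_0,j_0}=1$ and $V_{s_0,j_0}V_{s_0,j_0}^*\leq P$, yielding $1\precsim P$. Since $C(Z)\otimes \IK$ is stable, $1\in \cM(C(Z)\otimes \IK)$ is properly infinite, and so is $P$ (as $P\oplus P\leq 1\oplus 1\sim 1\precsim P$). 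Cuntz's Schroeder--Bernstein theorem for properly infinite projections then yields $P\sim 1$. I do not expect a substantial obstacle; the only subtle point is that the witnessing partial isometry must be $\Gamma$-equivariant, and this is automatic from the diagonal identification.
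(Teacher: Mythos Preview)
Your proof is correct and follows essentially the same route as the paper's. The paper also identifies $\cM(C)^\Gamma\cong\cM(C(Z)\otimes\IK)$ via evaluation, observes that the summands $\psi_{s,j}(1)=1$ for $j<0$ force $1\oplus 1\precsim\psi(1_C)$ (hence $\psi(1_C)$ is properly infinite and full), and then invokes Lemma~4.3 of \cite{Ror}---which is exactly the Cuntz-type result you call ``Schroeder--Bernstein for properly infinite projections.''
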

To see this, note first that $\cM(C)^\Gamma \cong \cM(C(Z)\otimes \IK)$.
Indeed the evaluation map at any $g\in \Gamma$ gives such an isomorphism.
Also, as the strictly continuous extension of $\psi_{s, j}$ is unital for all $s\in S$ and $j<0$, one has
$1_C \oplus 1_C \precsim \psi(1_C)$
in $\cM(C)^\Gamma$.
Hence $\psi(1_C)$ is properly infinite and full in $\cM(C)^\Gamma \cong \cM(C(Z) \otimes \IK)$.
Thus Claim \ref{Cl:1} follows from Lemma 4.3 of \cite{Ror}.

By Claim \ref{Cl:1}, one can choose $W\in \cM(C)^\Gamma$ with $W W^\ast=1_C$, $W^\ast W=\psi(1_C)$.
Then
\begin{equation}\label{eq:phi}
\varphi:=\ad(W)\circ \psi
\end{equation}
defines a unital, $\Gamma$-equivariant, and strictly continuous $\ast$-endomorphism on
$\cM(C)$.
Note that for any $s\in S$, $g\in \Gamma$, and $f\in \cM(C)$ with $f_g \neq 0$,
infinitely many summands of $\psi(f)_{sg}$ in the definition (\ref{eq:psi}) is of norm at least $\|f\|/2$ by the inequality (\ref{eq:full}),
and hence $\varphi(f)_{sg}$ is not in $C(Z)\otimes \IK$.
Thus $\varphi$ is injective and satisfies
\begin{equation}\label{equation:int}
\varphi(\cM(C)) \cap C =\{0\}.
\end{equation}
We also note that for $f\in C$ and $g\in \Gamma$, when $f_g$ is full in $C(Z)\otimes \IK$
then $\varphi(f)_{sg}$ is full in $\cM(C(Z)\otimes \IK)$ for all $s\in S$.
Indeed, by the fullness of $f_g$, one has $\delta_g \otimes 1_{C(Z)} \in \ide{f}{C_0(\Gamma \times Z)}$.
Then by (\ref{eq:psisj}), (\ref{eq:psi}) and (\ref{eq:phi}),
one has $1_{C(Z)\otimes \IK} \precsim \varphi(\delta_g \otimes 1_{C(Z)})_{sg}$, which proves the fullness of  $\varphi(f)_{sg}$ in $\cM(C(Z)\otimes \IK)$.

Put
\begin{equation}\label{eq:B}
B:=\varinjlim_{n\in \IZ}(\cM(C), \varphi).
\end{equation}
For $n\in \IZ$, let $\mu_{n}\colon \cM(C) \rightarrow B$ denote the $n$th canonical map.
Let $D$ be the \Cs-subalgebra of $B$ generated by the subsets $\mu_{j}(C)$; $j\in \IZ$.
Note that for $j, k\in \IZ$ with $j \leq k$, one has
\begin{equation}\label{eq:muj}
\mu_j(C)\cdot \mu_k(C)=\mu_k(\varphi^{k-j}(C))\cdot \mu_k(C) \subset \mu_k(C).
\end{equation}

Clearly $D$ is separable. Moreover the following claim holds true.
\begin{Cl}\label{Cl:2}
The \Cs-algebra $D$ is nuclear and satisfies the universal coefficient theorem \cite{RS}.
\end{Cl}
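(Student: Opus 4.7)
The plan is to realize $D$ as a countable inductive limit of \Cs-algebras, each of which is a finite iterated extension of copies of the type I \Cs-algebra $C = C_0(\Gamma \times Z)\otimes \IK$, and then to invoke the standard permanence of nuclearity and the UCT under extensions and inductive limits.

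First, for $n \in \IN$ I would let $D_n$ denote the \Cs-subalgebra of $B$ generated by $\{\mu_j(C) : |j|\leq n\}$, so that $D = \overline{\bigcup_n D_n}$. The key algebraic step is to iterate the relation (\ref{eq:muj}) (together with its adjoint) to show that any $m$-fold product $a_1 \cdots a_m$ with $a_i \in \mu_{j_i}(C)$ and $|j_i|\leq n$ must lie in $\mu_k(C)$, where $k = \max_i j_i$. This forces the $\ast$-algebra generated by these subsets to collapse to the closed linear sum, giving $D_n = \overline{\sum_{|j|\leq n}\mu_j(C)}$. Since $\varphi$ is injective (observed right after (\ref{equation:int})), each canonical map $\mu_n \colon \cM(C) \to B$ is injective; and since $\mu_j(C) = \mu_n(\varphi^{n-j}(C))$ for $j \leq n$, the map $\mu_n$ identifies $D_n$ isomorphically with the \Cs-subalgebra
\[ F_{2n}:=\overline{C+\varphi(C)+\cdots+\varphi^{2n}(C)} \subset \cM(C). \]

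It would then suffice to show inductively that each $F_m$ $(m\geq 0)$ is separable, nuclear, and satisfies the UCT. The base case $F_0=C$ is immediate, since $C$ is type I. For the inductive step, $C$ is an ideal in $F_m$ (being already an ideal in $\cM(C)$), and the crucial point is that (\ref{equation:int}) forces $F_m/C \cong F_{m-1}$. Concretely, the quotient map $\pi \colon \cM(C) \to \cM(C)/C$ is isometric on $\varphi(\cM(C))$ by (\ref{equation:int}), so it identifies $F_m/C = \pi(F_m)$ with $\overline{\varphi(C)+\cdots+\varphi^m(C)} = \varphi(F_{m-1}) \cong F_{m-1}$, the last isomorphism by injectivity of $\varphi$. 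The inductive step would then be closed by the well-known permanence of nuclearity and the UCT through short exact sequences of separable \Cs-algebras.

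Finally, $D = \varinjlim_n D_n$ is a countable inductive limit of separable nuclear UCT \Cs-algebras, hence itself enjoys these properties. I expect the hardest part to be the algebraic bookkeeping for the two identifications $D_n \cong F_{2n}$ (via iteration of (\ref{eq:muj})) and $F_m/C \cong F_{m-1}$ (via (\ref{equation:int})); neither step is conceptually deep, but both must be articulated carefully in the non-separable ambient algebra $\cM(C)$.
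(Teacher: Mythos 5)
Your proposal is correct, and its finite-stage skeleton coincides with the paper's --- your $D_n$ is exactly the paper's $D_{-n,n}$, and your $F_k$ satisfies $\mu_n(F_k)=D_{n-k,\,n}$ for every $n\in\IZ$ --- but your short exact sequence points in the opposite direction, which genuinely changes the key step. The paper (Claim \ref{Cl:D}) peels off the \emph{top} layer: $\{0\}\to D_{m+1,n}\to D_{m,n}\to C\to\{0\}$, which in your picture reads $\{0\}\to F_{k-1}\to F_k\to C\to\{0\}$ with quotient the single top copy $\varphi^{k}(C)\cong C$; its injectivity statement $(C+\varphi(C)+\cdots+\varphi^{k-1}(C))\cap\varphi^{k}(C)=\{0\}$ is proved by a minimal-index argument (take the least $j_0$ with $c_{j_0}\neq 0$, cancel $\varphi^{j_0}$ using injectivity of $\varphi$, and contradict (\ref{equation:int})). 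You instead peel off the \emph{bottom} copy of $C$ as the ideal, $\{0\}\to C\to F_m\to F_{m-1}\to\{0\}$, and your injectivity statement $\overline{\varphi(C)+\cdots+\varphi^{m}(C)}\cap C=\{0\}$ follows in one line, since the left-hand side sits inside the closed subalgebra $\varphi(\cM(C))$ and (\ref{equation:int}) applies verbatim; in effect your decomposition reduces the paper's induction to its $j_0=0$ instance. A second difference: the paper concludes that each finite stage is of type I (an extension of type I by type I) and quotes that separable type I \Cs-algebras are nuclear and satisfy the UCT, whereas you invoke two-out-of-three permanence of nuclearity and of the UCT directly; for the UCT step you should add the one clause that the extension is semi-split, which is automatic here by Choi--Effros since the quotient $F_{m-1}$ is nuclear by your induction hypothesis. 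What the paper's choice buys is reuse: its split equivariant sequence with quotient $C$ is invoked again for the amenability of $\gamma$ (Claim \ref{Cl:ame}) and for Claim \ref{Cl:uct}; your sequence would serve there as well (it is $\Gamma$-equivariant, and split via $\varphi$ itself), but the paper's decomposition is the one carrying that extra load. What yours buys is a cleaner injectivity argument and no detour through type I-ness.
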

For any two integers $m, n\in \IZ$ with $m\leq n$,
set 
\[D_{m, n}:= \mu_m(C)+ \mu_{m+1}(C) + \cdots +\mu_n(C).\]
By the relations (\ref{eq:muj}), each $D_{m, n}$ is a \Cs-subalgebra of $D$.
We next show that $D_{m, n}$ is of type I.
This follows by the same argument as in \cite{Ror}, Lemma 6.6.
Here we give a proof, as the following key short exact sequence will be also used in the proof of the amenability of actions.

\begin{Cl}\label{Cl:D}
For $m < n$, one has the $($split$)$ short exact sequence
\begin{equation}\label{equation:extD}
\{0\}\rightarrow D_{m+1, n} \rightarrow D_{m, n} \rightarrow C \rightarrow \{0\},\end{equation}
where the first map is the inclusion map,
and the second map comes from the isomorphism
$\Phi_{m, n} \colon C\rightarrow D_{m, n}/ D_{m+1, n}$
given by composing $\mu_{m}$ with the quotient map $D_{m, n} \rightarrow D_{m, n}/D_{m+1, n}$.
\end{Cl}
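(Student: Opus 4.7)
The plan is to pass through the injective $\ast$-homomorphism $\mu_n$ and reduce the claim to an assertion inside $\cM(C)$. Since $\mu_j = \mu_n \circ \varphi^{n-j}$ for $j \leq n$, one has $D_{m,n} = \mu_n(E_{n-m})$ and $D_{m+1,n} = \mu_n(E_{n-m-1})$, where
\[E_k := C + \varphi(C) + \cdots + \varphi^{k}(C) \subset \cM(C).\]
Since $\mu_n$ is injective, it suffices to prove that $E_k$ is a closed $\ast$-subalgebra of $\cM(C)$ in which $E_{k-1}$ is a closed two-sided ideal, and that $\varphi^k|_C$ descends to a $\ast$-isomorphism $C \xrightarrow{\sim} E_k/E_{k-1}$. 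Applying $\mu_n$ then transports this to the claimed SES, with $\Phi_{m,n}$ arising from the identity $\mu_n \circ \varphi^{n-m}|_C = \mu_m|_C$ and with $\mu_m|_C : C \to D_{m,n}$ serving as the splitting.

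The core algebraic step is to prove $\varphi^k(C) \cap E_{k-1} = \{0\}$. I would argue by a peeling procedure: given an identity
\[\varphi^k(c) = c_0 + \varphi(c_1) + \cdots + \varphi^{k-1}(c_{k-1}), \quad c, c_i \in C,\]
isolating $c_0$ yields $c_0 \in \varphi(\cM(C)) \cap C$, which vanishes by the critical property (\ref{equation:int}). Injectivity of $\varphi$ then strips the outer $\varphi$, reducing to an identity of depth $k-1$; iterating forces all the $c_i$ and ultimately $c$ to be zero.

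With the trivial intersection in hand, I would proceed by induction on $k$. The C*-subalgebra $\varphi^k(C)$ is closed (injective $\ast$-homomorphisms have closed image), $E_{k-1}$ is closed by the inductive hypothesis, and a direct computation using that $C$ is an ideal of $\cM(C)$ shows $E_{k-1}$ is a two-sided ideal in the algebraic $\ast$-subalgebra $\varphi^k(C) + E_{k-1}$. A standard extension-of-C*-algebras argument (using the trivial intersection to identify the algebraic quotient with the C*-algebra $\varphi^k(C)$, whose injective image in the closure $\overline{E_k}/E_{k-1}$ is automatically closed and dense, hence exhaustive) then shows $\varphi^k(C) + E_{k-1} = E_k$ is already closed, with $E_k/E_{k-1} \cong \varphi^k(C) \cong C$ via $\varphi^k|_C$.

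The heart of the argument is the trivial-intersection step, where the critical property (\ref{equation:int}) of $\varphi$ enters decisively; everything else is standard C*-algebra bookkeeping.
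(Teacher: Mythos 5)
Your proposal is correct and follows essentially the same route as the paper: after transporting everything into $\cM(C)$ via the injective canonical maps (the paper uses $\mu_k=\mu_{k+l}\circ \varphi^l$ to reduce the injectivity of $\Phi_{m,n}$ to $(C+\varphi(C)+\cdots+\varphi^{n-m-1}(C))\cap \varphi^{n-m}(C)=\{0\}$), the key step is exactly your peeling argument combining the injectivity of $\varphi$ with the intersection property (\ref{equation:int}). The only cosmetic differences are that the paper peels at the smallest index with nonzero coefficient to reach a contradiction in one step, and that your closedness bookkeeping for $E_k$ is dispatched earlier in the paper by the observation, via (\ref{eq:muj}), that each $D_{m,n}$ is a \Cs-subalgebra.
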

To prove Claim \ref{Cl:D}, it suffices to show that $\Phi_{m, n}$ is really an isomorphism.
The surjectivity of $\Phi_{m, n}$ is clear.
For the injectivity of $\Phi_{m, n}$, it suffices to show the next relation.

\begin{equation}\label{eq:D}
D_{m+1, n} \cap \mu_{m}(C) =\{0\}.
\end{equation}

Note that for $k\in \IZ$ and $l\in \IN$, one has
$\mu_k=\mu_{k+l} \circ \varphi^l$.
Hence
\[D_{m+1, n} \cap \mu_{m}(C)=\mu_n((C+\varphi(C)+ \cdots +\varphi^{n-m-1}(C)) \cap \varphi^{n-m}(C)).\]
Thus, to show (\ref{eq:D}), it suffices to show
\[(C+\varphi(C)+ \cdots +\varphi^{n-m-1}(C)) \cap \varphi^{n-m}(C)=\{0\}.\]
Suppose that one has $c\in C$
with
\[0\neq \varphi^{n-m}(c)\in C+\varphi(C)+ \cdots +\varphi^{n-m-1}(C).\]
Then one has a sequence $c_0, c_1, \ldots, c_{n-m-1} \in C$ with
\[\varphi^{n-m}(c)=\sum_{j=0}^{n-m-1}\varphi^j(c_j).\]
Let $j_0$ be the smallest integer with $c_{j_0}\neq 0$.
Then, since $\varphi$ is injective, one has
\[c_{j_0}= \varphi^{n-m-j_0}(c)-\sum_{k=j_0+1}^{n-m-1} \varphi^{k-j_0}(c_k)  \in \varphi(C).\]
This contradicts to the relation (\ref{equation:int}).
Hence $\Phi_{m, n}$ is injective, whence Claim \ref{Cl:D} is shown.

We now prove Claim \ref{Cl:2}.
Since an extension of type I \Cs-algebras is again of type I,
by the exact sequences in  (\ref{equation:extD}),
the \Cs-algebras $D_{m, n}$ must be of type I.
Now recall that separable type I \Cs-algebras are nuclear and satisfy the universal coefficient theorem \cite{RS}.
Since $D$ is the closure of the increasing union $\bigcup_{n\in \IN} D_{n, n}$, Claim \ref{Cl:2} holds true.

We now define the desired $\Gamma$-\Cs-algebra $A$.
Let $\sigma$ be the automorphism on $D$ satisfying
\begin{equation}\label{eq:sigma}
\sigma \circ \mu_{n}=\mu_{n}\circ \varphi~ (=\mu_{n-1})\quad {\rm for}\quad n\in \IZ.
\end{equation}
Since $\varphi$ is $\Gamma$-equivariant,
the action $\beta\colon \Gamma \acts C$ (see (\ref{eq:beta})) gives rise to the action $\gamma \colon \Gamma \acts D$
by the relations
\[\gamma_g\circ \mu_{n}=\mu_{n}\circ \beta_g \quad {\rm for}\quad n\in \IZ,~ g\in \Gamma.\]
(Indeed the same formulas give the actions $\IZ \acts B$ and $\Gamma \acts B$ by the universality of the inductive limit (\ref{eq:B}),
and the restrictions define the desired actions $\sigma$ and $\gamma$.)
Put
\[A:=D \rca{\sigma} \IZ.\]
Then $A$ is nuclear (see \cite{BO}, Theorem 4.2.4) and satisfies the universal coefficient theorem (see \cite{RS}).
Let $u\in \cM(A)$ denote the canonical implementing unitary element of $\sigma$.
Clearly $\gamma$ commutes with $\sigma$.
Hence $\gamma$ induces the action
$\alpha \colon \Gamma \acts A$ with
\[\alpha_g(au^n)=\gamma_g(a)u^n \quad {\rm for}\quad g \in \Gamma,~ a\in D,~ n\in \IZ.\]

We will show that $\alpha$ satisfies the desired conditions (\ref{item:a}) to (\ref{item:c}).

\begin{Cl}\label{Cl:ame}
The action $\alpha \colon \Gamma \acts A$
is amenable.
\end{Cl}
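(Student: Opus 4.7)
The plan is to establish amenability of $\gamma\colon \Gamma\acts D$ first, and then to deduce amenability of $\alpha$ from the amenability of $\IZ$ together with the fact that $\sigma$ commutes with $\gamma$.

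For the first step, the identity $\gamma_g\circ \mu_n=\mu_n\circ \beta_g$ makes every $\mu_n(C)$ a $\Gamma$-invariant \Cs-subalgebra of $D$; consequently so is every $D_{m,n}$. I would prove by induction on $n-m$ that the restricted action $\Gamma\acts D_{m,n}$ is amenable. For the base case $n=m$, the canonical map $\mu_n$ is injective (an inductive limit with injective connecting maps has injective canonical maps), so $D_{n,n}\cong C$ as a $\Gamma$-\Cs-algebra; as $\beta$ is strongly amenable, this gives the base step. For the inductive step, the short exact sequence in Claim~\ref{Cl:D} is $\Gamma$-equivariant: the inclusion is automatically equivariant since $D_{m+1,n}$ is $\Gamma$-invariant, and the isomorphism $\Phi_{m,n}$ intertwines $\beta$ and $\gamma$ by construction. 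The known permanence of amenability under $\Gamma$-equivariant extensions (see \cite{OS}) then promotes amenability from $D_{m+1,n}$ and $C$ to $D_{m,n}$. Finally, $D=\overline{\bigcup_k D_{-k,k}}$ is a $\Gamma$-equivariant inductive limit of amenable $\Gamma$-\Cs-algebras, so amenability of $\gamma$ follows from stability of amenability under countable inductive limits.

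For the second step, the commuting actions $\gamma$ and $\sigma$ assemble into a $(\Gamma\times \IZ)$-action on $D$; combined with amenability of $\gamma$ and of $\IZ$, this produces an amenable $(\Gamma\times \IZ)$-action on $D$. The standard permanence result that an amenable $\Gamma$-action extends to an amenable $\Gamma$-action on the reduced crossed product by a commuting action of an amenable group (see \cite{OS}, and \cite{AnaT} in the classical setting) then yields amenability of $\alpha$ on $A=D\rca{\sigma}\IZ$.

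The main obstacle is the extension step of the induction, where one must verify $\Gamma$-equivariance of $\Phi_{m,n}$; this reduces to the $\Gamma$-equivariance of $\varphi$ (which holds because $\psi$ is $\Gamma$-equivariant by the deliberate use of a single family $(V_{s,j})$ in (\ref{eq:psi}), and because $W\in \cM(C)^\Gamma$) together with the defining relation $\gamma_g\circ \mu_n=\mu_n\circ \beta_g$. Once these equivariance checks are in place, the three permanence principles (extensions, inductive limits, and crossed products by amenable groups) combine to give the desired amenability.
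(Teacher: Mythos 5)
Your first step reproduces the paper's argument for the amenability of $\gamma$ essentially verbatim: the paper also passes through the $\Gamma$-equivariant extensions (\ref{equation:extD}) among the $D_{m,n}$, invokes \cite{OS}, Proposition 3.7, and then uses permanence under inductive limits; your additional checks (injectivity of $\mu_n$, equivariance of $\Phi_{m,n}$ via the equivariance of $\varphi$ and the relation $\gamma_g\circ\mu_n=\mu_n\circ\beta_g$) are correct and consistent with what the paper asserts. So this half is fine.

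The gap is in your second step, which is where the actual content of the claim lies. The ``standard permanence result'' you invoke --- that an amenable $\Gamma$-action on $D$ induces an amenable $\Gamma$-action on $D\rca{\sigma}\IZ$ whenever $\sigma$ is a commuting action of an amenable group --- is not available as a citable theorem in the sources you name: \cite{AnaT} concerns commutative systems (and $D\rca{\sigma}\IZ$ is noncommutative even over a commutative base), while what \cite{OS} provides in this direction (Theorem 5.1, the result the paper actually uses) is a criterion for commuting actions of \emph{compact} groups, allowing one to deduce amenability of $\alpha\colon\Gamma\acts B$ from amenability of the restriction to the fixed-point algebra of a commuting compact group action. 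The paper's proof consists precisely in manufacturing such a compact action: the gauge action $\theta\colon\IT\acts A=D\rca{\sigma}\IZ$, $\theta_z(au^n)=z^nau^n$, commutes with $\alpha$ and has fixed-point algebra $D$, so amenability of $\gamma$ yields amenability of $\alpha$ by \cite{OS}, Theorem 5.1. Note that what is exploited here is not amenability of $\IZ$ but its abelianness (the existence of the dual compact action). Your intermediate assertion that $\gamma$ and $\sigma$ assemble into an amenable $(\Gamma\times\IZ)$-action on $D$ is true --- one can F{\o}lner-average witnesses of the quasi-central approximation property of \cite{OS}, Theorem 3.2 --- but it does not by itself yield amenability of $\Gamma\acts D\rca{\sigma}\IZ$: to produce witnesses in $\ell^2(\Gamma, D\rca{\sigma}\IZ)$ from witnesses $\xi\in\ell^2(\Gamma,D)$ one must additionally control commutators with the implementing unitary $u$, i.e., make $\|\tilde{\sigma}(\xi)-\xi\|$ small, and naive averaging over F{\o}lner sets of $\IZ$ destroys the normalization $\|\ip{\xi,\tilde{\alpha}_g(\xi)}-1\|\approx 0$, since the cross terms $\ip{\xi,\tilde{\sigma}_k(\xi)}$ for $k\neq 0$ are uncontrolled. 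Averaging over the compact gauge orbit is exactly what circumvents this difficulty, and that is the content of \cite{OS}, Theorem 5.1. The fix is short: replace your black box by the gauge action argument; as stated, your final step is an appeal to a theorem that does not exist in the cited literature, and proving it (even just for $H=\IZ$) is essentially equivalent to the paper's argument.
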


Since the action $\beta$ is amenable,
by the extension relations (\ref{equation:extD}) among $D_{m, n}$; $m\leq n$, which are clearly $\Gamma$-equivariant,
the restriction actions $\Gamma \acts D_{m, n}$; $m\leq n$, are amenable by \cite{OS}, Proposition 3.7.
Hence so is the inductive limit action $\gamma$.
By applying Theorem 5.1 of \cite{OS} to $\alpha$ and the gauge action 
\[\theta \colon\IT\acts D \rca{\sigma} \IZ =A; \quad \theta_z(a u^n)=z^n a u^n \quad {\rm~for~}z\in \IT,~a\in D,~ n\in \IZ\]  (which commutes with $\alpha$),
we conclude the amenability of $\alpha \colon \Gamma \acts A$.

We next show the following claim.

\begin{Cl}\label{Cl:type1}
The projection $\mu_{0}(\delta_e\otimes 1_{C(Z)})$ is properly infinite in $A$.
 \end{Cl}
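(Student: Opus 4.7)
The plan is to leverage the implementing unitary $u\in \cM(A)$ of $\sigma$: since
\[u\mu_0(q) u^\ast =\sigma(\mu_0(q)) =\mu_0(\varphi(q))\]
for $q:=\delta_e \otimes 1_{C(Z)}\in C$, it suffices to exhibit, inside $\mu_0(\varphi(q))$, two mutually orthogonal subprojections Murray--von Neumann equivalent to $\mu_0(q)$ in $D$; conjugating by $u^\ast$ then realises them inside $\mu_0(q)$ itself, giving proper infiniteness in $A$.

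First I would unpack $\psi(q)$. Formula \eqref{eq:psi} gives $\psi(q)_g=\bigoplus_{(s,j)\in S\times \IZ}\psi_{s,j}(q_{s^{-1}g})$; since $q_h=0$ unless $h=e$, only the summands with $s=g\in S$ contribute, yielding
\[\psi(q)_e=\sum_{j\in \IZ} V_{e,j}\,\psi_{e,j}(1_{C(Z)})\,V_{e,j}^\ast.\]
For $j<0$, formula \eqref{eq:psisj} reduces $\psi_{e,j}$ to the pullback $f\mapsto f\circ \rho_{e,j}$, so $\psi_{e,j}(1_{C(Z)})=1_{C(Z)}$. Hence $\psi(q)_e$ dominates the two mutually orthogonal projections $P_j:=V_{e,-j}\cdot 1_{C(Z)}\cdot V_{e,-j}^\ast$ for $j=1,2$, each equivalent to $1_{C(Z)}$ in $C(Z)\otimes \IK$ via the partial isometry $V_{e,-j}\cdot 1_{C(Z)}$.

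Next, I would lift to $C$: let $\tilde P_j \in C$ be the projection supported at $g=e$ with value $P_j$, and $\tilde T_j\in C$ the element supported at $g=e$ with value $V_{e,-j}\cdot 1_{C(Z)}$. Then $\tilde P_1\tilde P_2=0$, $\tilde P_1+\tilde P_2\leq \psi(q)$, and $\tilde T_j$ witnesses $q\sim \tilde P_j$ in $C$. With $W$ from Claim \ref{Cl:1}, set $Q_j:=W\tilde P_j W^\ast$; since $\tilde P_j\leq \psi(q)\leq \psi(1_C)=W^\ast W$, the element $W\tilde P_j\tilde T_j\in \cM(C)\cdot C\subset C$ is a partial isometry witnessing $q\sim Q_j$ in $C$, while $Q_1,Q_2\in C$ are orthogonal with $Q_1+Q_2\leq W\psi(q)W^\ast=\varphi(q)$. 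Applying $\mu_0$ produces orthogonal projections $\mu_0(Q_j)\in \mu_0(C)\subset D$, each equivalent to $\mu_0(q)$ in $D$, with $\mu_0(Q_1)+\mu_0(Q_2)\leq \mu_0(\varphi(q))=u\mu_0(q)u^\ast$; conjugating by $u^\ast$ in $A$ then completes the proof. The main subtlety is keeping every partial isometry inside $C$ rather than merely $\cM(C)$, so that its $\mu_0$-image lies in $D\subset A$ and the equivalences survive passage to the crossed product; this is automatic here because $C$ is an ideal in $\cM(C)$ and the relevant projections already lie in $C$.
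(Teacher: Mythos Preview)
Your argument is correct and follows essentially the same route as the paper: both use that $\psi_{e,j}(1_{C(Z)})=1_{C(Z)}$ for $j<0$ to obtain $\mu_0(q)\oplus\mu_0(q)\precsim\sigma(\mu_0(q))$ in $D$, and then use the implementing unitary $u$ to conclude $\sigma(\mu_0(q))\sim\mu_0(q)$ in $A$. The paper states the first relation in one line, whereas you spell out explicitly the partial isometries $W\tilde T_j\in C$ witnessing it and flag the point that they must lie in $C$ (not merely $\cM(C)$) so that their $\mu_0$-images land in $D$; this is a worthwhile detail to make explicit, but the underlying idea is the same.
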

Note that for any $s\in S$ and $j<0$, one has $\psi_{s, j}(1_{C(Z)})=1_{C(Z)}$.
Hence one has
\[\mu_{0}(\delta_e\otimes 1_{C(Z)})\oplus \mu_{0}(\delta_e\otimes 1_{C(Z)}) \precsim \sigma(\mu_{0}(\delta_e\otimes 1_{C(Z)}))\]
in $D$
by the definitions of $\varphi$ and $\sigma$ (see (\ref{eq:psisj}), (\ref{eq:psi}), (\ref{eq:phi}), and (\ref{eq:sigma})).
Moreover, one has
\[\sigma(\mu_{0}(\delta_e\otimes 1_{C(Z)})) =u \mu_{0}(\delta_e\otimes 1_{C(Z)}) u^\ast \sim \mu_{0}(\delta_e\otimes 1_{C(Z)})\]
in $A$. These two relations prove Claim \ref{Cl:type1}.
 
The next claim is the hardest part of the proof of Theorem \ref{Thm:1}.
 \begin{Cl}\label{Cl:type2}
The projection $\mu_{0}(\delta_e \otimes p_0)$ is not properly infinite in $A\rca{\alpha} \Gamma$.
 \end{Cl}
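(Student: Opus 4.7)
The plan is to apply Lemma \ref{Lem:fp} to $E := D \rca{\gamma} \Gamma$ with the commuting $\IZ$-action $\tilde{\sigma}$, so that $A \rca{\alpha} \Gamma \cong E \rca{\tilde{\sigma}} \IZ$. Take $p := \mu_0(\delta_e \otimes 1_{C(Z)})$ and $q := \mu_0(\delta_e \otimes p_0)$. The first hypothesis $p \precsim q \oplus q$ follows from equation (\ref{eq:trivial}) applied via the $\ast$-homomorphism $x \mapsto \mu_0(\delta_e \otimes x)$. The heart of the argument is the second hypothesis $p \not\precsim \bigoplus_{n \in \IZ} \tilde{\sigma}^n(q)$ in $\cM(E \otimes \IK)$; by Remark \ref{Rem:finite}, it reduces to showing, for every finite $F \subset \IZ$,
\[p \not\precsim \bigoplus_{n \in F} \mu_{-n}(\delta_e \otimes p_0) \quad {\rm in}\quad E \otimes \IK,\]
using $\tilde{\sigma}^n(q) = \mu_{-n}(\delta_e \otimes p_0)$.

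First I would restrict to $F \subset \IN$ by passing to the quotient of $E$ by the ideal generated by the $\Gamma$-invariant ideal $D_{1, \infty} := \overline{\bigcup_n D_{1, n}} \triangleleft D$, which contains $\mu_{-n}(\delta_e \otimes p_0)$ for $n < 0$ but not $p$ or $\mu_{-n}(\delta_e \otimes p_0)$ for $n \geq 0$ (by iteration of Claim \ref{Cl:D}); amenability of $\gamma$ established in the argument of Claim \ref{Cl:ame} makes the corresponding sequence of reduced crossed products exact. Assume thus $F = \{0, \ldots, K\}$. Using $\mu_{-n} = \mu_0 \circ \varphi^n$ together with the Murray--von Neumann equivalence $\varphi(x) \sim \psi(x)$ in $\cM(C)$ (from $\varphi = \ad(W) \circ \psi$ with $W^\ast W = \psi(1_C)$), replace each $\mu_{-n}(\delta_e \otimes p_0)$ by $\mu_0(\psi^n(\delta_e \otimes p_0))$ up to equivalence. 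Identifying $D_{-K, 0} \cong \sum_{k=0}^K \varphi^k(C) \subset \cM(C)$ as $\Gamma$-algebras via $\mu_0$ (injective on $\cM(C)$ since $\varphi$ is) and composing the $\Gamma$-equivariant embeddings
\[D_{-K, 0} \rca{\gamma} \Gamma \hookrightarrow \cM(C) \rca{\beta} \Gamma \hookrightarrow \cM(C \rca{\beta} \Gamma) \cong \cM(C(Z) \otimes \IK),\]
where the last isomorphism comes from $C \rca{\beta} \Gamma \cong \IK(\ell^2 \Gamma) \otimes C(Z) \otimes \IK \cong C(Z) \otimes \IK$ (right translation on $C_0(\Gamma)$ produces $\IK(\ell^2\Gamma)$, then absorbed into the $\IK$-factor), transports the comparison into $\cM(C(Z) \otimes \IK)$. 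Under this composite embedding, $p$ maps to a projection Murray--von Neumann equivalent to $1_{C(Z)}$, while iterated fiber-wise use of (\ref{eq:MvN}) shows that $\bigoplus_{n=0}^K \mu_0(\psi^n(\delta_e \otimes p_0))$ maps to a projection equivalent to $\bigoplus_{\lambda \in \Lambda} p_{I_\lambda}$, with $\Lambda := \bigsqcup_{n=0}^K (S \times \IZ)^n$ and $I_\lambda := \kappa_{s_1, j_1} \circ \cdots \circ \kappa_{s_n, j_n}(\{0\})$ for $\lambda = ((s_1, j_1), \ldots, (s_n, j_n))$.

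Finally I apply Proposition \ref{Prop:Ror} with the map $N \colon \Lambda \to \IN$ given by
\[N(\lambda) := \nu(s_1, j_1, \nu(s_2, j_2, \ldots, \nu(s_n, j_n, 0) \ldots)).\]
Iterating the inclusion $\nu(S \times \IZ \times \Lambda_k) \subset \Lambda_{k+1}$ from $0 \in \Lambda_0$ places $N(\lambda)$ in $\Lambda_n$ for $\lambda \in (S \times \IZ)^n$, and tracing the definition of $\kappa$ shows $N(\lambda) \in I_\lambda$. Injectivity of $N$ is clear: different lengths hit pairwise disjoint $\Lambda_n$'s, and within a fixed length the coordinates $(s_i, j_i)$ are recovered iteratively by injectivity of $\nu$. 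Proposition \ref{Prop:Ror} then yields $1_{C(Z)} \not\precsim \bigoplus_\lambda p_{I_\lambda}$ in $\cM(C(Z) \otimes \IK)$, which pulls back to the desired non-comparison in $E \otimes \IK$ and completes the proof via Lemma \ref{Lem:fp}.

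The main obstacle is the identification of $\bigoplus_{n=0}^K \mu_0(\psi^n(\delta_e \otimes p_0))$ with $\bigoplus_{\lambda \in \Lambda} p_{I_\lambda}$ in $\cM(C(Z) \otimes \IK)$: one must carefully verify that the embedding $\cM(C) \rca{\beta} \Gamma \hookrightarrow \cM(C \rca{\beta} \Gamma)$ converts the $\Gamma$-diagonal structure of $\psi^n(\delta_e \otimes p_0) \in \ell^\infty(\Gamma, \cM(C(Z) \otimes \IK))$ into a direct sum indexed by $g \in S^n$, and that each fiber further decomposes, by iterated (\ref{eq:MvN}), into the claimed $p_I$'s indexed by factorizations $g = s_1 \cdots s_n$ with $(s_i, j_i) \in S \times \IZ$.
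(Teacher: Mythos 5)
Your proof is correct, and it organizes the argument in a genuinely different way from the paper, while sharing the same combinatorial core. The paper applies Lemma \ref{Lem:fp} once, to $D$ with the \emph{product} group $\IZ\times\Gamma$ acting by $\sigma\times\gamma$, so that the non-comparison hypothesis (\ref{equation:2}) lives entirely in $\cM(D\otimes\IK)$: a hypothetical comparison is pushed down, via Remark \ref{Rem:finite} and the inductive limit structure, to $\cM(C)=\ell^\infty(\Gamma,\cM(C(Z)\otimes\IK))$; negative powers of $\sigma$ are absorbed by the shift trick $\delta_e\otimes 1_{C(Z)}\precsim\varphi^k(\delta_e\otimes 1_{C(Z)})$ of (\ref{eq:vn4}) (which uses $e\in S$); and the contradiction is extracted by evaluating at the single fiber $e\in\Gamma$, where each tuple $(\fs,\fk)\in\Omega$ contributes exactly once via the unique $g=(s_1\cdots s_n)^{-1}$. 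You instead apply Lemma \ref{Lem:fp} with only $\IZ$ acting on $E=D\rca{\gamma}\Gamma$, which forces you to rule out a comparison \emph{inside a crossed product}; you dispose of negative powers by killing the tail ideal $D_{1,\infty}$ rather than by shifting (so you do not need $e\in S$ at this point), and you reach $\cM(C(Z)\otimes\IK)$ through the untwisting $C\rca{\beta}\Gamma\cong\IK(\ell^2(\Gamma))\otimes C(Z)\otimes\IK$, which aggregates all $\Gamma$-fibers of $\psi^n(\delta_e\otimes p_0)$ into one direct sum. The endgame --- the sets $I_\lambda=\kappa_{s_1,j_1}\circ\cdots\circ\kappa_{s_n,j_n}(\{0\})$, the injective map $N$ landing in the pairwise disjoint $\Lambda_n$'s, and Proposition \ref{Prop:Ror} --- is identical to the paper's $(\cI(\omega),N)$, and indeed both routes arrive at the same multiset of projections. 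What the paper's route buys is that no comparison ever has to be controlled in a crossed product and no functoriality or exactness statement for reduced crossed products is needed; what your route buys is that the analytic input is concentrated in one clean untwisting isomorphism, at the cost of the quotient machinery.

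Two repairs, both local. First, you embed only $D_{-K,0}\rca{\gamma}\Gamma$, but a partial isometry witnessing the hypothetical comparison lives in the full quotient crossed product $(D/D_{1,\infty})\rca{}\Gamma\otimes\IK$, not in $D_{-K,0}\rca{\gamma}\Gamma\otimes\IK$; since non-comparison pulls back along embeddings but not along inclusions of the witnessing algebra, you must identify $D/D_{1,\infty}$ with $\overline{\sum_{k\geq 0}\varphi^k(C)}\subset\cM(C)$ via $\mu_0$ (iterating the split exact sequences of Claim \ref{Cl:D}, checking their compatibility in $n$, and passing to closures so that $\mu_0(C)\cap D_{1,\infty}=\{0\}$) and run your embedding chain on this whole algebra --- the same formulas apply verbatim, so the fix is cosmetic. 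Second, amenability of $\gamma$ is more than you need for the quotient step: the reduced crossed product is functorial for equivariant surjections, so the canonical surjection $E\to (D/D_{1,\infty})\rca{}\Gamma$, which kills $\tilde{\sigma}^n(q)=\mu_{-n}(\delta_e\otimes p_0)$ for $n<0$, exists for any $\Gamma$, and that is all your contradiction uses.
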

For convenience of the proof of Claim \ref{Cl:type2}, we introduce a few notations.
For a set $X$, we put $X^0:=\{\emptyset\}$.
For a finite sequence $\mathfrak{x}=(x_1, x_2, \ldots, x_n)\in X^n$; $n \geq 1$,
we define $\mathfrak{x}':=(x_2, \ldots, x_{n})\in X^{n-1}$, regarded as $\emptyset$ when $n=1$.
For $\fs, \ft, \fk, \fl \in X^n$, $n\geq 1$, and $1\leq j \leq n$, we denote their $j$th term by $s_j, t_j, k_j, l_j$ respectively.

We introduce the index set
\[\Omega:=\bigsqcup_{n\in \IN}(S^n\times \IZ^n ).\]
For $\omega\in \Omega_n:= S^n\times \IZ^n $; $n\in \IN$, we define $|\omega|:=n$
and call it the length of $\omega$.
We define an indexed family $(\cI(\omega))_{\omega\in \Omega}$ of finite subsets in $\IN$ by induction on the length as follows.
On $\Omega_0$,
we set
\[\cI(\emptyset, \emptyset):=\{0\}\subset \IN.\]
By induction on $n$, on $\Omega_n$, $n\geq 1$,
we define
\begin{equation}\label{eq:rec1}
\cI(\fs, \fk):=\kappa_{s_1, k_1}(\cI(\fs', \fk')) ~(=\nu(s_1, k_1, \cI(\fs', \fk')))
\end{equation}
for $(\fs, \fk)\in \Omega_n$.
By (\ref{eq:MvN}), (\ref{eq:psi}), and (\ref{eq:phi}), for $n\in \IN$ and $g\in \Gamma$, we have
\[\varphi^n(\delta_g \otimes p_0) \sim \psi^n(\delta_g \otimes p_0) \sim \bigoplus_{(\fs, \fk)\in \Omega_n} \delta_{s_1 \cdots s_n g}\otimes p_{\cI(\fs, \fk)}\]
in $\cM(C)$.

We observe that the indexed family $(\cI(\omega))_{\omega\in \Omega}$ satisfies the hypothesis of Proposition \ref{Prop:Ror}:
There is an injective map $N\colon \Omega \rightarrow \IN$ with $N(\omega) \in \cI(\omega)$ for $\omega \in \Omega$.
We define the desired map $N$ by induction on the length of $\omega\in \Omega$.
We first set $N(\emptyset, \emptyset):=0\in \cI(\emptyset, \emptyset)$.
Then, for $(\fs, \fk)\in \Omega_n$, $n\geq 1$, we inductively define
\begin{equation}\label{eq:rec2}
N(\fs, \fk):=\nu(s_1, k_1, N(\fs', \fk'))\in \kappa_{s_1, k_1}(\cI(\fs', \fk'))=\cI(\fs, \fk).
\end{equation}
We show the injectivity of $N$.
Observe that $N(\omega)\in \Lambda_{|\omega |}$ for $\omega\in \Omega$ by the choice of $\nu$ and the definition of $N$.
Hence, as $\Lambda_n$; $n\in \IN$, are pairwise disjoint,
it suffices to show the injectivity of $N$ on each $\Omega_n$.
We prove this by induction on $n$.
The case $n=0$ is trivial.
Suppose that the claim holds for $n-1 \in \IN$.
Pick two elements $(\fs, \fk), (\ft, \fl)\in \Omega_n$
with $N(\fs, \fk)=N(\ft, \fl)$.
Then, by the definition (\ref{eq:rec2}) of $N$ and the injectivity of $\nu$,
the equality implies
\[s_1 =t_1, \quad k_1=l_1, \quad N(\fs', \fk')=N(\ft', \fl').\]
By induction hypothesis, the last condition implies
$(\fs', \fk')=(\ft', \fl')$.
Thus we conclude $(\fs, \fk)=(\ft, \fl)$, whence $N$ is injective on $\Omega_n$.

Note that for two projections $p, q$ in $\cM(C)$,
one has $p\precsim q$ in $\cM(C)$ if and only if for all $g\in \Gamma$, $p_g \precsim q_g$ holds in $\cM(C(Z)\otimes \IK)$.
As a result, we obtain
\begin{equation}\label{eq:vn5}
\delta_e \otimes 1_{C(Z)}\not\precsim \bigoplus_{(\fs, \fk)\in \Omega, g\in \Gamma}\left( \delta_{s_1\cdots s_n g}\otimes p_{\cI(\fs, \fk)}\right) \sim \bigoplus_{(g, n)\in \Gamma \times \IN} \beta_g(\varphi^n(\delta_e \otimes p_0))
\end{equation}
in $\cM(C)$,
because the value of the middle projection at the unit $e \in \Gamma$ is Murray--von Neumann equivalent to
\[r:=\bigoplus_{(\fs, \fk)\in \Omega} p_{\cI(\fs, \fk)},\]
which satisfies $1_{C(Z)}\not\precsim r$ by Proposition \ref{Prop:Ror}.

We next show the relation
\begin{equation}\label{equation:vn1}\mu_{0}(\delta_e \otimes 1_{C(Z)}) \not\precsim \bigoplus_{(g, n)\in \Gamma \times \IZ} \gamma_g(\sigma^n(\mu_{0}(\delta_e \otimes p_0)))\end{equation}
in $\cM(B\otimes \IK)$ (which implies the same relation in $\cM(D\otimes \IK)$).
To lead to a contradiction, assume that (\ref{equation:vn1}) fails.
Then by Remark \ref{Rem:finite}, one has a finite subset $F \subset \Gamma\times \IZ$ with
\begin{equation}\label{eq:Bproj}
\mu_{0}(\delta_e \otimes 1_{C(Z)}) \precsim \bigoplus_{(g, n)\in F} \gamma_g(\sigma^n(\mu_{0}(\delta_e \otimes p_0)))
\end{equation}
in $D\otimes \IK$.
Because $B$ is the inductive limit
\[\varinjlim_{n\in \IZ}(\cM(C), \varphi),\]
by (\ref{eq:Bproj}),
for a sufficiently large $m\in \IN$ (with $n+m \geq 0$ for all $(g, n)\in F$),
one has
\begin{equation}\label{eq:vn3}\varphi^m (\delta_e \otimes 1_{C(Z)})  \precsim \bigoplus_{(g, n)\in F} \beta_g(\varphi^{n+m}(\delta_e \otimes p_0))
\end{equation}
in $\cM(C)$.
Since $e\in S$, for $k\in \IN$, we also have
\begin{equation}\label{eq:vn4}
\delta_e\otimes 1_{C(Z)}\precsim \varphi^k(\delta_e \otimes 1_{C(Z)}).
\end{equation}
The relations (\ref{eq:vn3}) and (\ref{eq:vn4}) imply
\[\delta_e\otimes 1_{C(Z)}\precsim  \bigoplus_{(g, n)\in F} \beta_g(\varphi^{n+m}(\delta_e \otimes p_0)),\]
which contradicts to the relation (\ref{eq:vn5}).

At the same time, by (\ref{eq:trivial}), we also have 
\begin{equation}\label{equation:vn2}
\mu_{0}(\delta_e \otimes 1_{C(Z)}) \precsim \mu_{0}((\delta_e \otimes p_0)\oplus (\delta_e \otimes p_0))
\end{equation}
in $D$.
Thus, by Lemma \ref{Lem:fp}, applied to $D\rca{\sigma\times \gamma}(\IZ \times \Gamma)=A\rca{\alpha}\Gamma$ with $p=\mu_0(\delta_e \otimes 1_{C(Z)})$, $q=\mu_0(\delta_e \otimes p_0) \in D$, and the group $\IZ \times \Gamma$,
the relations (\ref{eq:vn4}) and (\ref{equation:vn2}) prove Claim \ref{Cl:type2}.

We next show the following claim.
\begin{Cl}\label{Cl:simpleA}
The \Cs-algebra $A$ is simple.
\end{Cl}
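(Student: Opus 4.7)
The plan is to apply the standard simplicity criterion for $\IZ$-crossed products (Kishimoto, Olesen--Pedersen): $A = D \rca{\sigma} \IZ$ is simple provided $(a)$ $D$ admits no nontrivial $\sigma$-invariant closed two-sided ideal, and $(b)$ $\sigma^n$ is properly outer on $D$ for every nonzero $n \in \IZ$.

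Condition $(b)$ should follow from the disjointness relation $\varphi(\cM(C)) \cap C = \{0\}$ (equation (\ref{equation:int})), together with its iterated form $\varphi^n(C) \cap \sum_{j<n} \varphi^j(C) = \{0\}$ extracted inside the proof of Claim \ref{Cl:D}. These force the filtration pieces $\mu_k(C)$ to be shifted rigidly by $\sigma$ with pairwise trivial intersections $\mu_k(C) \cap \mu_{k-n}(C) = \{0\}$ for $n \neq 0$, thereby ruling out any multiplier implementation of $\sigma^n$ on a nonzero $\sigma^n$-invariant closed ideal of $D$.

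For $(a)$, let $J \subset D$ be a nonzero $\sigma$-invariant closed two-sided ideal. Since $D = \overline{\bigcup_{N} D_{-N, N}}$, pick $N$ with $J \cap D_{-N, N} \neq \{0\}$ and a nonzero $d = \sum_{k = -N}^{N} \mu_k(c_k) \in J \cap D_{-N, N}$. Using $\mu_k = \mu_N \circ \varphi^{N-k}$, rewrite $d = \mu_N(\tilde{c})$ with $\tilde{c} := \sum_{k=-N}^{N} \varphi^{N-k}(c_k) \in \cM(C)$; the injectivity of $\mu_N$, inherited from that of $\varphi$, gives $\tilde{c} \neq 0$. Since $C$ is an essential ideal of $\cM(C)$, the \Cs-subalgebra $\tilde{c} C \tilde{c}^* \subset C$ is nonzero, and $\mu_N(\tilde{c} C \tilde{c}^*) = d \cdot \mu_N(C) \cdot d^* \subset J \cap \mu_N(C)$, so there is a nonzero $f \in C$ with $\mu_N(f) \in J$. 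By $\sigma$-invariance, $\sigma^k(\mu_N(f)) = \mu_N(\varphi^k(f)) \in J$ for every $k \geq 0$, and multiplying by $\mu_N(C)$ on both sides yields $\mu_N(C \cdot \varphi^k(f) \cdot C) \subset J$. The fullness inequality (\ref{eq:full}), combined with the support propagation $\supp(\psi^k(f)) \supset S^k \cdot \supp(f)$ and the fact that $S$ generates $\Gamma$ as a semigroup, guarantees that for every $g \in \Gamma$ there exists $k \geq 0$ with $\varphi^k(f)_g$ full in $\cM(C(Z) \otimes \IK)$; hence the closed ideal of $C$ generated by $\{\varphi^k(f) : k \geq 0\}$ equals $C$, and $\mu_N(C) \subset J$. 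A further application of $\sigma$-invariance yields $\mu_l(C) \subset J$ for every $l \in \IZ$, so $J = \overline{\bigcup_l \mu_l(C)} = D$.

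The most delicate point I foresee is $(b)$: while the rigid-shift intuition plainly suggests outerness of $\sigma^n$, its upgrade to proper outerness in the Olesen--Pedersen sense, holding on all $\sigma^n$-invariant closed sub-quotients of $D$, will require some care---perhaps via the Fourier decomposition associated with the gauge action $\theta$ on $A$, combined with (\ref{equation:int}). Once $(a)$ and $(b)$ are established, the Kishimoto criterion yields the simplicity of $A$.
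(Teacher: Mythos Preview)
Your treatment of condition $(a)$ is essentially the paper's argument, with one caveat: the paper proves the stronger fact that $D$ has no nontrivial $\sigma^k$-invariant ideal for \emph{every} $k\geq 1$, not just $k=1$. This strengthening is not cosmetic; it is exactly what is needed to run the paper's argument for $(b)$.

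Your condition $(b)$, however, has a genuine gap. The observation that $\sigma$ shifts the filtration pieces rigidly, i.e.\ $\mu_k(C)\cap\mu_{k-n}(C)=\{0\}$ for $n\neq 0$, does not by itself imply proper outerness of $\sigma^n$: an automorphism can permute essential ideals with trivial pairwise intersections and still fail to be properly outer in the Olesen--Pedersen sense (one must rule out approximate innerness on every invariant hereditary subalgebra, not merely exhibit a global displacement). Your fallback suggestion via the gauge action is equally unspecific. The paper's route is different and relies on the R{\o}rdam--Villadsen projection calculus already established in Claim~\ref{Cl:type2}: from the relations (\ref{equation:vn1}) and (\ref{equation:vn2}) one reads off
\[
\sigma^n\big(\mu_0(\delta_e\otimes p_0)\big)\ \not\sim\ \mu_0(\delta_e\otimes p_0)\quad\text{in }D,\qquad n\in\IZ\setminus\{0\}.
\]
Since $D$ is $\sigma^n$-simple (the strengthened $(a)$), the Olesen--Pedersen characterization applies: on a $\sigma^n$-prime algebra, failure of proper outerness forces $\sigma^n$ to be approximately inner, hence to preserve Murray--von Neumann equivalence classes of projections, contradicting the displayed inequivalence. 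This Murray--von Neumann obstruction is the missing key idea in your sketch of $(b)$.
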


We first show that for $k \geq 1$, $D$ does not contain a $\sigma^k$-invariant proper ideal.
Let $J$ be a nonzero ideal of $D$ with $\sigma^k(J)=J$.
Observe that $D$ is the closure of the union of the increasing \Cs-subalgebras
\[\mu_{n}(\cM(C)) \cap D,\quad n\in \IZ.\]
Hence $J$ contains a nonzero element in $\mu_{N}(\cM(C))$ for a sufficiently large $N\in \IZ$.
Then, as the ideal
\[\mu_{N}(C) \lhd \mu_{N}(\cM(C)) \cap D\]
is essential, 
we obtain $J \cap \mu_{N}(C) \neq \{0\}$.
Thus we have non-empty open subsets $T\subset \Gamma$, $U \subset Z$ with
$\mu_{N}(C_0(T\times U)\otimes \IK) \subset J$.
By the definition (\ref{eq:phi}) of $\varphi$ (see also (\ref{eq:psisj}), (\ref{eq:psi})), the inequality (\ref{eq:full}), and the assumption $e\in S$,
one has
\begin{equation}\label{eq:ideal1}
\ide{\varphi^k(C_0(T\times U)\otimes \IK)}{C} \supset C_0(T\times Z)\otimes \IK.
\end{equation}
Also, for any subset $M\subset \Gamma$, one has
\begin{equation}\label{eq:ideal2}
\ide{\varphi(C_0(M \times Z)\otimes \IK)}{C} \supset C_0((S\cdot M) \times Z)\otimes \IK.
\end{equation}
For $n \geq 1$, set
\[S^{\bullet n}:=\{s_1 s_2 \cdots s_n: s_k\in S {\rm~for~}k=1, 2, \ldots, n\}.\]
Then, for $l \geq 2$, it follows from (\ref{eq:ideal1}) and (\ref{eq:ideal2}) that
\begin{align*}
\ide{\varphi^{kl}(C_0(T\times U)\otimes \IK)}{C} &\supset \ide{\varphi^{k(l-1)}(C_0(T\times Z)\otimes \IK)}{C}\\
&\supset C_0((S^{\bullet k(l-1)} \cdot T) \times Z) \otimes \IK.
\end{align*}
Since $S \subset S^{\bullet k}$, the subset $S^{\bullet k}$ also generates $\Gamma$ as a semigroup.
Since $J$ is $\sigma^k$-invariant, we obtain
\[\mu_{N}(\varphi^{kl}(C_0(T \times U)\otimes \IK)) \subset \sigma^{kl}(J) = J\]
for all $l \in \IN$.
Hence 
\begin{equation}\label{eq:J}
\mu_{N}(C)\subset J.
\end{equation}
Since $J$ is $\sigma^k$-invariant, the relation (\ref{eq:J}) yields
\begin{equation}\label{eq:J2}
\mu_{kn+N}(C)\subset J \quad {\rm~for~} n\in \IZ.
\end{equation}
For all $j\in \IN$, one has $\ide{\varphi^j(C)}{C}=C$ by (\ref{eq:ideal2}).
Hence, for any $l, m\in \IN$ with $l\geq m$, one has
\[\mu_{l}(C)\subset \ide{\mu_{m}(C)}{D}.\]
Thus the relation (\ref{eq:J2}) implies $J=D$.

It follows from the relations (\ref{equation:vn1}) and (\ref{equation:vn2}) that
\[\sigma^n (\mu_{0}(\delta_e \otimes p_0)) \not \sim \mu_{0}(\delta_e \otimes p_0)\]
in $D$ for $n\in \IZ\setminus \{0\}$.
By a characterization of proper outerness in \cite{OP}; cf.~ the argument in Page 132 of \cite{Ror},
we conclude the proper outerness of $\sigma^k$ for $k \geq 1$.
Thus Claim \ref{Cl:simpleA} follows from Kishimoto's theorem \cite{Kis}.

We next show the following claim.
\begin{Cl}\label{Cl:outer}
The action $\alpha \colon \Gamma \acts A$ is pointwise outer.
\end{Cl}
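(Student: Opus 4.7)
The plan is to reduce the pointwise outerness of $\alpha$ on $A$ to an outerness statement inside $\cM(D)$ by spectral analysis against the gauge action $\theta \colon \IT \acts A$, and then to harvest a contradiction directly from the projection non-equivalences already obtained in the proof of Claim \ref{Cl:simpleA}.

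Fix $g \in \Gamma \setminus \{e\}$ and suppose, toward contradiction, that $\alpha_g = \ad(v)$ for some $v \in \cU(\cM(A))$. Since $\alpha_g(u)=u$, the automorphism $\alpha_g$ commutes with $\theta_z(au^n) := z^n a u^n$. Hence $z \mapsto \theta_z(v) v^{\ast}$ is a strictly continuous homomorphism from $\IT$ into the unitary group of $Z(\cM(A))$. By the classical fact that $Z(\cM(A)) = \IC \cdot 1$ for simple separable $A$ (which applies by Claim \ref{Cl:simpleA}), this homomorphism is a character of $\IT$, so $\theta_z(v) = z^m v$ for some integer $m$. Consequently $w := v u^{-m}$ is $\theta$-fixed; using that the $\theta$-fixed part of $\cM(A)$ equals $\cM(D)$ (by standard facts on strict extensions of compact group actions applied to the faithful conditional expectation $E = \int_\IT \theta_z\,dz$), we have $w \in \cU(\cM(D))$. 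Expanding $v a v^{\ast} = \gamma_g(a)$ for $a \in D$ then yields the identity
$\gamma_g \circ \sigma^{-m} = \ad(w)$ as automorphisms of $D$.

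To reach a contradiction I test this identity on $p := \mu_{0}(\delta_e \otimes p_0)$. A direct calculation using (\ref{eq:beta}) and (\ref{eq:sigma}) gives $\sigma^{-m}(p) = \mu_{m}(\delta_e \otimes p_0)$ and therefore $(\gamma_g \circ \sigma^{-m})(p) = \mu_{m}(\delta_{g^{-1}} \otimes p_0)$. If $\gamma_g \circ \sigma^{-m} = \ad(w)$ held, the partial isometry $wp \in D$ would implement $p \sim (\gamma_g \circ \sigma^{-m})(p)$ in $D$, hence $p \oplus p \sim p \oplus (\gamma_g \circ \sigma^{-m})(p)$ in $D \otimes \IK$. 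Combined with (\ref{equation:vn2}) this would force
$\mu_{0}(\delta_e \otimes 1_{C(Z)}) \precsim p \oplus (\gamma_g \circ \sigma^{-m})(p)$,
which is a finite sub-summand of $\bigoplus_{(g',n') \in \Gamma \times \IZ} \gamma_{g'}(\sigma^{n'}(p))$. By Remark \ref{Rem:finite}, this contradicts (\ref{equation:vn1}) and closes the proof.

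The main obstacle I anticipate is the spectral reduction in the first step: it rests on the identification $\cM(A)^\theta = \cM(D)$ and on $Z(\cM(A)) = \IC$ for simple separable $A$; both are standard but merit care, especially the strict-continuous extension of $\theta$. Once this reduction is available the conclusion is immediate from the comparisons already recorded for Claim \ref{Cl:simpleA}. Notably, in contrast to the proof for the $\IZ$-action $\sigma$, there is no need here to invoke the Olesen--Pedersen characterization of proper outerness nor to analyze invariant ideals, since we only need to rule out implementation by a multiplier unitary rather than deduce proper outerness.
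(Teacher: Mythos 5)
Your proof is correct, but it takes a genuinely different route from the paper's. The paper never passes through the gauge action or multiplier-algebra structure: it reuses the regular-representation device from the proof of Lemma \ref{Lem:fp}, embedding $A=D\rca{\sigma}\IZ$ into $\cM(D\otimes \IK(\ell^2(\IZ)))$ via $\Phi$, and pushes the two comparison relations (\ref{equation:vn1}) and (\ref{equation:vn2}) through $\Phi$ to conclude the stronger statement that $\mu_0(\delta_e\otimes p_0)\not\sim \alpha_g(\mu_0(\delta_e\otimes p_0))$ in $A\otimes\IK$ for every $g\neq e$; since an inner automorphism preserves Murray--von Neumann classes, pointwise outerness is immediate, with no appeal to simplicity of $A$ or to any structure of $\cM(A)$. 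Your argument instead decomposes a hypothetical implementing unitary $v\in\cU(\cM(A))$ spectrally against $\theta$, writing $v=wu^m$ with $w\in\cU(\cM(D))$, and then contradicts (\ref{equation:vn1})/(\ref{equation:vn2}) inside $D$ via $\gamma_g\circ\sigma^{-m}=\ad(w)|_D$. All the steps you flag do hold: $\theta$ extends point-strictly to $\cM(A)$ and commutes with $\alpha_g$ there (one checks $\alpha_g(u)=u$ using that $\gamma$ and $\sigma$ commute); $A'\cap\cM(A)=Z(\cM(A))=\IC$ by simplicity alone (separability is not needed); and $\cM(A)^\theta=\cM(D)$ since a $\theta$-fixed multiplier maps $D=A^\theta$ into itself and $D$ contains an approximate unit of $A$. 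The final contradiction is also sound: $(e,0)$ and $(g,-m)$ are distinct indices in $\Gamma\times\IZ$ because $g\neq e$, so $\mu_0(\delta_e\otimes 1_{C(Z)})\precsim p\oplus\gamma_g(\sigma^{-m}(p))$ violates (\ref{equation:vn1}) (here you do not even need Remark \ref{Rem:finite}, as a finite subsum of a direct sum of projections is automatically subequivalent to the full sum in the multiplier algebra). What each approach buys: the paper's is shorter, self-contained given Lemma \ref{Lem:fp}'s machinery, independent of Claim \ref{Cl:simpleA}, and yields the extra K-theoretic information that $\alpha_g$ moves the equivalence class of $\mu_0(\delta_e\otimes p_0)$; yours costs the standard but nontrivial multiplier/gauge-action facts (and the dependence on simplicity for the trivial center), but in exchange localizes the obstruction in $D$, exhibiting exactly which twisted equivalences $p\sim\gamma_g(\sigma^{-m}(p))$ are excluded for all $m\in\IZ$ simultaneously.
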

Note that the simplicity of $A\rca{\alpha}\Gamma$
follows from Claims \ref{Cl:simpleA}, \ref{Cl:outer}, and Kishimoto's theorem \cite{Kis}.
Hence this will also imply the finiteness of $\mu_{0}(\delta_e \otimes p_0)$ in $A\rca{\alpha} \Gamma$
by Claim \ref{Cl:type2} and \cite{Cun} (see also Proposition 6.11.3 in \cite{BlaK}).

To show Claim \ref{Cl:outer},
we consider the faithful left regular representation
\[\Phi\colon A=D\rca{\sigma} \IZ \rightarrow \cM(D\otimes \IK(\ell^2(\IZ)))\]
as in the proof of Lemma \ref{Lem:fp}.
Let $q\in \IK(\ell^2(\IZ))$ be a minimal projection.
Then the relation (\ref{equation:vn1}) implies
\begin{align}\label{align1}
\mu_{0}(\delta_e\otimes 1_{C(Z)}) \otimes q &\not\precsim \bigoplus_{g\in \Gamma, n\in \IZ} \alpha_g(\sigma^n(\mu_{0}(\delta_e\otimes p_0)))\otimes q	\\
&\sim \bigoplus_{g\in \Gamma} \Phi(\alpha_g(\mu_{0}(\delta_e \otimes p_0)))\notag \end{align}
in $\cM(D\otimes \IK(\ell^2(\IZ)))$.
Also, the relation (\ref{equation:vn2}) implies
\begin{align}\label{align2}\mu_{0}(\delta_e\otimes 1_{C(Z)})\otimes q &\precsim \Phi(\mu_{0}(\delta_e \otimes 1_{C(Z)}))\\
&\precsim \Phi(\mu_{0}(\delta_e \otimes p_0)\oplus \mu_{0}(\delta_e \otimes p_0))\notag 
\end{align}
in $\cM(D\otimes \IK(\ell^2(\IZ)))$.
Thus, for $g\in \Gamma \setminus \{e\}$, the two relations (\ref{align1}) and (\ref{align2}) force
 \[\mu_{0}(\delta_e \otimes p_0)\not\sim \alpha_g(\mu_{0}(\delta_e \otimes p_0))\]
 in $A\otimes \IK$. This proves Claim \ref{Cl:outer}.

We next show the following claim. This will complete the proof of the first part of Theorem \ref{Thm:1}.
\begin{Cl}\label{Cl:uct}
The \Cs-algebra $A\rca{\alpha}\Gamma$ is nuclear and satisfies the universal coefficient theorem \cite{RS}.
\end{Cl}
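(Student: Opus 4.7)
The plan is to identify $A \rca{\alpha}\Gamma$ with a single crossed product and then invoke standard preservation theorems. Since $\sigma$ and $\gamma$ commute by construction, they combine into an action $\sigma \times \gamma \colon \IZ \times \Gamma \acts D$, and the iterated-crossed-product identity (valid for reduced crossed products by commuting actions) gives
\[
A \rca{\alpha} \Gamma \;=\; (D \rca{\sigma} \IZ) \rca{\alpha} \Gamma \;\cong\; D \rca{\sigma \times \gamma} (\IZ \times \Gamma).
\]
This reduction was already used (for different purposes) in the proof of Claim \ref{Cl:type2}, so no additional justification is required.

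Next I would check that the product action $\sigma \times \gamma$ is amenable. The proof of Claim \ref{Cl:ame} already established that $\gamma \colon \Gamma \acts D$ is amenable as an inductive limit of the amenable restriction actions $\Gamma \acts D_{m,n}$. Since $\IZ$ is amenable and $\sigma \times \gamma$ is an extension of $\gamma$ by the amenable group $\IZ$ (equivalently, the gauge-action argument from Theorem 5.1 of \cite{OS} used in Claim \ref{Cl:ame} applies to the product $\IZ \times \Gamma$ as well), the product action $\sigma \times \gamma$ on $D$ is amenable.

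With amenability of $\sigma \times \gamma$ in hand, nuclearity of $D \rca{\sigma \times \gamma}(\IZ \times \Gamma)$ follows from nuclearity of $D$ (Claim \ref{Cl:2}) together with the fact that an amenable action of a discrete group on a nuclear \Cs-algebra has a nuclear reduced crossed product; see e.g.~\cite{BO}. For the universal coefficient theorem, I would invoke Tu's theorem: the transformation groupoid $D \rtimes (\IZ \times \Gamma)$ is amenable, so its reduced \Cs-algebra lies in the bootstrap class whenever $D$ does. Since $D$ satisfies the UCT by Claim \ref{Cl:2}, so does $A \rca{\alpha} \Gamma$.

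The main obstacle is conceptual rather than computational: one must recognize that no new dynamical input is required at this stage, so the argument is simply a synthesis of the nuclearity-of-$D$ and amenability-of-$\alpha$ already established, combined with the appropriate permanence theorems for the crossed product construction.
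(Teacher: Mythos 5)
There is a genuine gap, and it is located exactly where you wave at ``Tu's theorem.'' Tu's theorem concerns amenable \emph{groupoids}: it applies to reduced \Cs-algebras of amenable transformation groupoids, i.e., to crossed products of \emph{commutative} \Cs-algebras. Since $D$ is noncommutative, there is no transformation groupoid $D \rtimes (\IZ \times \Gamma)$ to which the theorem could apply, and no permanence theorem is known which asserts that an amenable action of a non-amenable discrete group on a (noncommutative) \Cs-algebra satisfying the universal coefficient theorem has a reduced crossed product satisfying it; even for actions of \emph{finite} groups this is an open problem, closely tied to the UCT problem itself. So your closing assertion that ``no new dynamical input is required'' is precisely what fails: amenability of $\alpha$ plus nuclearity of $D$ (Claims \ref{Cl:ame} and \ref{Cl:2}) do suffice for the nuclearity half of Claim \ref{Cl:uct} (by Anantharaman-Delaroche's theorem that $A \rca{\alpha}\Gamma$ is nuclear when $A$ is nuclear and $\alpha$ is amenable — your reduction to $D \rca{\sigma\times\gamma}(\IZ\times\Gamma)$ is also legitimate, though the amenability of the product action deserves a real justification in the noncommutative setting, e.g.\ via the double-dual characterization in \cite{OS}, rather than an appeal to ``extensions''), but they give no purchase whatsoever on the UCT half.

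The paper's proof supplies the missing structural input: it does the $\Gamma$-crossed product \emph{first}, where the specific form of $\beta$ trivializes everything. Namely, $C \rca{\beta}\Gamma \cong \IK(\ell^2(\Gamma)) \otimes C(Z) \otimes \IK$ is of type I, and the $\Gamma$-equivariant split exact sequences (\ref{equation:extD}) from Claim \ref{Cl:D} descend to exact sequences of reduced crossed products, so each $D_{m,n}\rca{\gamma}\Gamma$ is type I and $D \rca{\gamma}\Gamma$ is an inductive limit of type I \Cs-algebras — hence nuclear and in the bootstrap class. Only then does one cross by $\IZ$, using $A \rca{\alpha}\Gamma \cong (D \rca{\gamma}\Gamma) \rca{\sigma} \IZ$, and here the UCT permanence \emph{is} known: the bootstrap class of \cite{RS} is closed under inductive limits and under crossed products by $\IZ$. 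In short, the order of operations matters: crossing by $\Gamma$ at the type I level, where the crossed product is computable, is what replaces the nonexistent general UCT permanence your argument relies on.
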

To show Claim \ref{Cl:uct},
observe that
$C \rca{\beta} \Gamma$
is of type I, as it is isomorphic to $\IK(\ell^2(\Gamma))\otimes C(Z) \otimes \IK$.
The $\Gamma$-equivariant split short exact squences among $D_{m, n}$; $m\leq n$ in (\ref{equation:extD})
give rise to the short exact sequences
\[\{0\}\rightarrow D_{m+1, n}\rca{\gamma}\Gamma \rightarrow D_{m, n}\rca{\gamma}\Gamma \rightarrow C\rca{\beta}\Gamma \rightarrow \{0\}.\]
Thus each $D_{m, n}\rca{\gamma} \Gamma$ is of type I.
Hence $D\rca{\gamma}\Gamma$ is the closure of an increasing union of type I \Cs-algebras.
Since $A\rca{\alpha} \Gamma\cong (D\rca{\gamma} \Gamma)\rca{\sigma}\IZ$,
Claim \ref{Cl:uct} follows by \cite{RS}.

Lastly, we prove the last claim of Theorem \ref{Thm:1}; unital examples for the free groups.
Since $A$ is simple and contains a properly infinite projection,
it follows from \cite{Cun} that
$A$ contains properly infinite (nonzero) projections $p, q$ with $[p]_0=0$ in K$_0(A)$ and $pq=0$.
Then, by \cite{Cun}, the projections $(\alpha_g(p))_{g\in \Gamma}$ are mutually unitary equivalent in $A$.

Now let $\Gamma$ be a countable free group, and let $\mathcal{F}$ be its free basis.
For each $s\in \mathcal{F}$, we pick a unitary element $u_s\in \cM(A)$ with $u_s \alpha_s(p) u_s^\ast =p$.
Now define $\hat{\alpha} \colon \Gamma \acts pAp$ to be
\[\hat{\alpha}_s(x):=u_s\alpha_s(x)u_s^\ast\]
for $s\in \mathcal{F}$ and $x\in pAp$. (By the freeness, indeed this extends to an action $\hat{\alpha}$ of $\Gamma$.)
One has an isomorphism
\[\Theta \colon pAp \rca{\hat{\alpha}} \Gamma \rightarrow p(A \rca{\alpha} \Gamma)p\]
given by $\Theta(xs)=xu_s s$ for $x\in pAp$ and $s\in \mathcal{F}$.
Hence $\hat{\alpha}$ satisfies the condition (\ref{item:c}) in Theorem \ref{Thm:1}.
The other two conditions (\ref{item:a}) and (\ref{item:b}) follow from the corresponding properties of $\alpha$ and $A$.
\end{proof}
\begin{Rem}\label{Rem:simple}
When an action $\alpha \colon \Gamma \acts A$ is amenable,
the normal subgroup $\{g\in \Gamma: \alpha_g \text{ is inner}\} \lhd \Gamma$
is amenable.
In particular, when $\Gamma$ has trivial amenable radical,
amenable actions must be pointwise outer.

Thus, for any non-trivial group $\Gamma$,
by embedding $\Gamma$ into a group $\Upsilon$ with trivial amenable radical (e.g., $\Gamma \ast \IZ$),
applying the construction in the proof of Theorem \ref{Thm:1} to $\Upsilon$ instead of $\Gamma$,
and then restricting the action to $\Gamma$,
we also obtain the desired $\Gamma$-action
whose pointwise outerness is clear by the reason in the previous paragraph.
\end{Rem}
\begin{Rem}
Although we do not use the following observation in this article, 
it would be useful to distinguish some \Cs-dynamical systems.

The resulting \Cs-algebra $A \rca{\alpha} \Gamma$ in the proof of Theorem \ref{Thm:1} further satisfies the following property:
For any non-empty finite subsets $F\subset \Gamma$ and $I\subset \Lambda_0$ with $|I| \geq |F|$,
the projection 

\[\mu_{0}(\chi_F \otimes p_I) =\sum_{g\in F} \alpha_g(\mu_{0}(\delta_e \otimes p_I) )~ (\sim \bigoplus_F \mu_0(\delta_e \otimes p_I) {\rm~ in~} A\rca{\alpha} \Gamma)\]
is finite in $A \rca{\alpha} \Gamma$.

To see this, choose an injective map $n_\bullet \colon F \rightarrow I$.
We define indexed families of finite subsets $(\mathcal{J}(\omega))_{\omega\in \Omega}$ of $\IN$ and $(M(\omega, s))_{\omega \in \Omega} \subset \IN$ for each $s\in F$
by the same recurrence relations as in the definitions of $\cI(\omega)$ and $N(\omega)$
(see (\ref{eq:rec1}), (\ref{eq:rec2})), 
but with the different initial values
\[\mathcal{J}(\emptyset, \emptyset)=I,\quad M(\emptyset, \emptyset, s)=n_s.\]
Then it follows that $M\colon \Omega \times F \rightarrow \IN$ is injective and
satisfies $M(\omega, s) \in \mathcal{J}(\omega)$ for $(\omega, s)\in \Omega \times S$.
Now observe that
\begin{equation}\label{eq:Rem}
\bigoplus_{(g, n)\in \Gamma \times \IN} \beta_g(\varphi^n(\chi_F \otimes p_{I}))
\sim \bigoplus_{\substack{(\fs, \fk)\in \Omega,\\ (t, g)\in F \times \Gamma}} \delta_{s_1\cdots s_n t g}\otimes p_{\mathcal{J}(\fs, \fk)}
\end{equation}
in $\cM(C)$.
Obviously for each $\fs\in S^n$ and $s\in \Gamma$, there are exactly $|F|$ elements $(t, g)$ in $F \times \Gamma$
with $s_1 s_2 \cdots s_n tg =s$.
Thus the value of the right hand side in (\ref{eq:Rem}) at any $s \in F$ is Murray--von Neumann equivalent to
\[r:=\bigoplus_{(\omega, s) \in \Omega \times F}p_{\mathcal{J}(\omega)}.\]
By Proposition \ref{Prop:Ror}, one has $1_{C(Z)} \not \precsim  r$ in $\cM(C(Z)\otimes \IK)$.
Thus
\[\chi_F\otimes 1_{C(Z)} \not \precsim \bigoplus_{(g, n)\in \Gamma \times \IN} \beta_g(\varphi^n(\chi_F \otimes p_{I}))\]
in $\cM(C)$.
Now by the same proof as Claim \ref{Cl:type2} in Theorem \ref{Thm:1},
we conclude the finiteness of $\mu_{0}(\chi_F \otimes p_I)$ in $A \rca{\alpha} \Gamma$.
\end{Rem}

 \section{Theorem \ref{Thm:2}}
 For the proof of Theorem \ref{Thm:2}, we record the following basic fact.
 This is the non-commutative version of the (well-known) last statement of Theorem 5.1.7 in \cite{BO}.
\begin{Lem}\label{Lem:sep}
Let $\Gamma$ be a countable group.
Let $\alpha \colon \Gamma \acts A$ be an amenable action on a unital \Cs-algebra $A$.
Let $\mathcal{S}\subset A$ be a separable subset.
Then there is a unital separable $\Gamma$-\Cs-subalgebra
$B\subset A$ such that $\mathcal{\mathcal{S}}\subset B$ and
that the action $\Gamma \acts B$ is amenable.
\end{Lem}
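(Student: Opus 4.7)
The plan is a Löwenheim--Skolem style countable generation argument, directly analogous to Theorem 5.1.7 of \cite{BO} in the commutative setting. The key input is a sequential characterization of amenability of $\alpha \colon \Gamma \acts A$ whose witnessing elements can be chosen inside $A$ itself.

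More precisely, amenability of $\alpha$ on a unital $A$ is equivalent (cf.~the formulations in \cite{OS}) to the existence of a sequence $(\xi_n)_{n\in \IN}$ of finitely supported functions $\xi_n \colon \Gamma \to A$ such that $\|\sum_{g\in \Gamma} \xi_n(g)^\ast \xi_n(g) - 1_A\| \to 0$ and, for each fixed $g\in \Gamma$, $\|\sum_{h\in \Gamma} \alpha_g(\xi_n(g^{-1}h))^\ast \xi_n(h) - 1_A\| \to 0$ as $n \to \infty$. I would fix such a sequence and set
\[\mathcal{T}:= \mathcal{S}\cup\{1_A\}\cup \{\xi_n(g): n\in \IN,~g\in \Gamma\} \subset A.\]
Since $\Gamma$ is countable and each $\xi_n$ has finite support, $\mathcal{T}$ is separable.

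Next I would define
\[B:= \Cso\bigl(\alpha_g(a) : g\in \Gamma,~ a\in \mathcal{T}\bigr) \subset A.\]
Because $\Gamma$ is countable and $\mathcal{T}$ is separable, $B$ is a separable unital \Cs-subalgebra of $A$ containing $\mathcal{S}$, and is $\Gamma$-invariant by construction. Each $\xi_n(g)$ lies in $B$ and $1_B = 1_A$, so the two approximate identities of the previous paragraph continue to hold verbatim when read inside $B$; hence the same sequence $(\xi_n)$ witnesses amenability of the restricted action $\Gamma \acts B$.

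The one subtlety, which dictates the choice of characterization, is that the approximating witnesses must be available with values in $A$ rather than in $\cM(A)$ or in $Z(A^{\ast\ast})$—otherwise the generated subalgebra $B$ need not sit inside $A$. In the unital setting this reformulation is standard (see \cite{OS}); once it is granted, the rest is a routine countable-closure argument.
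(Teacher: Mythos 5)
Your approach is genuinely different from the paper's, but as written it has a gap at exactly the point you yourself flag as the ``one subtlety''. The characterization of amenability you invoke --- finitely supported $\xi_n\colon \Gamma \to A$ with $\|\langle \xi_n, \xi_n\rangle_A - 1\|\to 0$ and $\|\langle \xi_n, \tilde{\alpha}_g(\xi_n)\rangle_A - 1\|\to 0$ for each $g$ --- is \emph{not} the one in \cite{OS}. For actions on noncommutative \Cs-algebras, the characterization in \cite{OS} (Definition 2.11 and Theorem 3.2, which is what this paper uses) is the \emph{quasi-central} approximation property: besides the positive-type conditions, the witnesses must satisfy $\|a\xi_n - \xi_n a\|\to 0$ for $a$ ranging over compact subsets of $A$. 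Your condition, with centrality dropped, is an Exel--Ng-type approximation property; its equivalence with amenability is a nontrivial theorem in its own right (of Buss--Echterhoff--Willett type, for discrete groups) and is not contained in \cite{OS}. Note that the direction you actually need after restricting to $B$ --- ``such $\xi_n$ with values in $B$ imply amenability of $\Gamma \acts B$'' --- is precisely the hard implication of that theorem. So either you quote that deeper result explicitly, in which case your one-step generation argument does work, or the proof as justified is incomplete.

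The quasi-centrality requirement is also the structural reason the paper's proof cannot be one-step: the witnesses must be approximately central with respect to the subalgebra they generate, and adjoining the image of a witness (and its $\Gamma$-translates) creates new elements against which no centrality has been arranged. The paper therefore runs a bootstrap of L\"owenheim--Skolem type: choose $\xi_0$ quasi-central over a compact set $K_0$ whose closed linear span contains $\mathcal{S}$, let $B_1$ be the $\Gamma$-\Cs-subalgebra generated by $K_0$ and the image of $\xi_0$, choose $\xi_1$ quasi-central over a compact $K_1$ spanning a dense subspace of $B_1$, and iterate; in the limit each $\xi_n$ is $\epsilon_n$-central over $K_n$, the sets $K_n$ exhaust a dense subset of $B$, and the resulting sequence, which lies in $\ell^2(\Gamma, B)$, witnesses amenability of $\Gamma \acts B$ in the sense of \cite{OS}. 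If you replace your single fixed sequence by this iteration --- re-choosing the witness at each stage against the algebra generated so far --- your argument becomes the paper's proof and requires nothing beyond \cite{OS}.
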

\begin{proof}
Take a (norm) compact subset $K_0 \subset A$ whose closed linear span contains $\mathcal{S}$.
Fix an increasing sequence $(F_n)_{n\in \IN}$ of finite subsets of $\Gamma$ whose union is $\Gamma$.
Fix a sequence $(\epsilon_n)_{n\in \IN}$ in $(0, 1)$ converging to $0$.
By the amenability of $\alpha$ (see Definition 2.11 and Theorem 3.2 in \cite{OS}), one can find $\xi_0 \in \ell^2(\Gamma, A)$ with
\[\| \ipa{\xi_0, \tilde{\alpha}_{g}(\xi_0)} -1 \|\leq \epsilon_0, \quad \|a\xi_0 - \xi_0 a\| \leq \epsilon_0\]
for $g\in F_0$ and $a\in K_0$.
Define $B_1$
to be the $\Gamma$-\Cs-subalgebra of $A$ generated by $K_0$ and the image of $\xi_0$. Fix a compact subset $K_1 \subset B_1$
that spans a dense subspace of $B_1$ with $K_0 \subset K_1$.
Again by the amenability of $\alpha$,
one has 
 $\xi_1 \in \ell^2(\Gamma, A)$ with
\[\|\ipa{\xi_1, \tilde{\alpha}_{g}(\xi_1)} -1 \| \leq \epsilon_1, \quad \|a\xi_1 - \xi_1 a\| \leq \epsilon_1\]
for $g\in F_1$ and $a\in K_1$.
 
 We iterate this argument by induction.
 Then we get an increasing sequence $(B_n)_{n=1}^\infty$ of separable $\Gamma$-\Cs-subalgebras;
 an increasing sequence $(K_n)_{n\in \IN}$ of compact subsets of $A$;
 and a sequence $(\xi_n)_{n\in \IN}$ in $\ell^2(\Gamma, A)$ satisfying the following conditions for $n\in \IN$:
 \begin{itemize}
 \item The closed linear span of $K_{n+1}$ is equal to $B_{n+1}$.
 \item $B_{n+1}$ contains the image of $\xi_n$.
 \item For $g\in F_n$ and $a\in K_n$, one has
 \[\|\ipa{\xi_n, \tilde{\alpha}_{g}(\xi_n)} -1 \|\leq {\epsilon_n}, \quad \|a\xi_n - \xi_n a\| \leq \epsilon_n.\]

 \end{itemize}
The closure $B$ of the increasing union $\bigcup_{n=1}^\infty B_n$ is a unital separable $\Gamma$-\Cs-subalgebra of $A$ containing $\mathcal{S}$.
The sequence $(\xi_n)_{n\in \IN}$, which sits in $\ell^2(\Gamma, B)$ by the second condition,
witnesses the amenability of $\Gamma \acts B$ by the other two conditions.
\end{proof}

\begin{proof}[Proof of Theorem \ref{Thm:2}]
We apply the construction in the proof of Theorem \ref{Thm:1} with $S=\Gamma$.
(In fact the condition $S^{\bullet n}=\Gamma$ for some $n \geq 1$ is enough.)
Then the resulting inductive limit \Cs-algebra $B$ (\ref{eq:B}) therein is also simple.
This follows from the following property of $\varphi$.
\begin{Cl}
When $S=\Gamma$, for any nonzero $f\in C$,
one has $\ide{\varphi(f)}{C}=C$.
Moreover, $\varphi^2(f)$ is full in $\cM(C)$.
\end{Cl}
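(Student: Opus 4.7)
The plan is to prove the two claims in sequence, reducing the first to a statement about $\psi$ and then using it to deduce the second. For $\ide{\varphi(f)}{C} = C$: the identities $\varphi(f) = W\psi(f)W^*$ and $\psi(f) = W^*\varphi(f)W$, together with $W, W^* \in \cM(C)$, imply $\ide{\psi(f)}{\cM(C)} = \ide{\varphi(f)}{\cM(C)}$, so it suffices to prove $\ide{\psi(f)}{C} = C$. Identifying $C \cong c_0(\Gamma, C(Z)\otimes \IK)$ and noting that the ideals of $C(Z)\otimes\IK$ correspond to open subsets of $Z$, this equality is equivalent to $\psi(f)_h(z) \neq 0$ for every $h \in \Gamma$ and $z \in Z$. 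Pick $g_0 \in \Gamma$ with $f_{g_0}\neq 0$; since $S=\Gamma$, one may take $s := hg_0^{-1} \in S$, and by (\ref{eq:psi}) the element $V_{s,j}\psi_{s,j}(f_{g_0})V_{s,j}^*$ is a summand of $\psi(f)_h$ for every $j \in \IZ$. By (\ref{eq:full}), infinitely many $j \geq 0$ satisfy $\|\psi_{s,j}(f_{g_0})(z)\| \geq \|f_{g_0}\|/2$ for all $z$; the pairwise orthogonality of the ranges of the $V_{s',j'}$'s then forces $\|\psi(f)_h(z)\| \geq \|f_{g_0}\|/2 > 0$.

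For the fullness of $\varphi^2(f)$ in $\cM(C)$, fix a rank-one projection $e \in \IK$ and set $a := \delta_{g_0}\otimes 1_{C(Z)}\otimes e \in C$. The strategy is to show (i) $\varphi(a) \in \ide{\varphi^2(f)}{\cM(C)}$ and (ii) $\varphi(a)$ is full in $\cM(C)$; combining them yields $\cM(C) = \ide{\varphi(a)}{\cM(C)} \subset \ide{\varphi^2(f)}{\cM(C)}$. Part (i) is immediate from the first claim together with the norm-continuity of the $\ast$-homomorphism $\varphi$: $a \in C \subset \ide{\varphi(f)}{\cM(C)}$, and applying $\varphi$ transfers this to the $\varphi^2$-level.

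For (ii), I will construct an isometry $w = (w_h)_{h\in\Gamma} \in \cM(C)$ with $ww^* \leq \varphi(a)$. For each $h$, put $s := hg_0^{-1}$; since $\psi_{s,j}(1_{C(Z)}\otimes e) = 1_{C(Z)}\otimes e$ for every $j<0$ by (\ref{eq:psisj}), the element $\psi(a)_h$ dominates the strictly convergent infinite orthogonal sum $\sum_{j<0} V_{s,j}(1_{C(Z)}\otimes e)V_{s,j}^*$ of rank-one projections. Fixing a unit vector $\xi_0$ in the range of $e$ and an orthonormal basis $(\eta_j)_{j<0}$ of $\ell^2$, the standard shift construction $v_h := \sum_{j<0} V_{s,j}(1_{C(Z)}\otimes \xi_0\eta_j^*)$ converges strictly in $\cM(C(Z)\otimes\IK)$ to an isometry with $v_h v_h^* \leq \psi(a)_h$. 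The inequality $v_h v_h^* \leq \psi(1_C) = W^*W$ then ensures that $w_h := \tilde{W}v_h$ is itself an isometry satisfying $w_h w_h^* \leq \varphi(a)_h$, where $\tilde{W}$ denotes the common value of $W$ under $\cM(C)^\Gamma \cong \cM(C(Z)\otimes \IK)$; the uniformly bounded family $(w_h)_{h\in\Gamma}$ assembles into the required $w$. The main obstacle is the clean handling of the infinite strict sum defining $v_h$ and its interaction with conjugation by $\tilde{W}$, but both are routine consequences of the relations $WW^*=1_C$, $W^*W=\psi(1_C)$, and the mutual orthogonality of the ranges of $(V_{s,j})$.
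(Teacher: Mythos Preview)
Your argument is correct and follows essentially the same route as the paper. For the first statement, both you and the paper reduce to the pointwise nonvanishing of $\psi(f)_h(z)$ via (\ref{eq:full}); your bound $\|f_{g_0}\|/2$ is slightly weaker than the paper's $\|f\|/2$ (obtained by choosing $g_0$ with $\|f_{g_0}\|=\|f\|$), but this is immaterial. For the second statement, your argument is a fully fleshed-out version of the paper's one-line justification ``since $1_C \precsim \varphi(\delta_e\otimes 1_{C(Z)})$'': the paper's element $\delta_e\otimes 1_{C(Z)}$ (with $1_{C(Z)}$ regarded in $C(Z)\otimes\IK$ via the corner embedding) plays exactly the role of your $a=\delta_{g_0}\otimes 1_{C(Z)}\otimes e$, and the paper's subordination $1_C\precsim \varphi(\delta_e\otimes 1_{C(Z)})$ is precisely what your explicit isometry $w=(w_h)_h$ witnesses.
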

Indeed, for any nonzero $f\in C$,
by the definition of $\varphi$ (see (\ref{eq:psisj}), (\ref{eq:psi}), (\ref{eq:phi})) and the inequality (\ref{eq:full}),
one has 
\[\min_{g\in \Gamma, z\in Z} \|\varphi_g(f)(z)\| \geq \frac{\|f\|}{2} >0.\]
This proves the first statement. 
Then, since $1_C \precsim \varphi(\delta_e \otimes 1_{C(Z)})$ in $\cM(C)$ (by the choice $S=\Gamma$), the second statement also holds true.

Since $\Gamma$ is exact, the right translation action
$\Gamma \acts \cM(C)$
is amenable.
(Indeed one has
a unital $\Gamma$-equivariant embedding of $\ell^\infty(\Gamma)$ into the center of $\cM(C) =\ell^\infty(\Gamma, \cM(C(Z)\otimes \IK))$.
By the exactness of $\Gamma$,
the right translation action $\Gamma \acts \ell^\infty(\Gamma)$ is amenable by Ozawa's theorem \cite{Oz}.
Hence $\beta$ is indeed (strongly) amenable.)
Hence the inductive limit action
\[\Gamma \acts B=\varinjlim_{n\in \IZ}(\cM(C), \varphi)\]
is amenable as well.
Note that $B$ is non-exact as $\cM(\IK) \subset B$.

By applying Lemma \ref{Lem:sep} to a separable, non-exact \Cs-subalgebra $\mathcal{S}$ of $B$ containing $\mu_{0}(C)$
and isometry elements $v, w$
with $w^\ast v=0$,
we obtain a unital separable non-exact $\Gamma$-\Cs-subalgebra $B_0 \subset B$
whose restriction $\Gamma$-action is amenable
and contains $\mathcal{S}$.
By applying Lemma 5.7 of \cite{Ror} (\cite{Bla}) to $B_0$,
we obtain a simple separable \Cs-subalgebra $C_0\subset B$ containing $B_0$.
By iterating these constructions alternatively, we obtain an increasing sequence 
\[B_0 \subset C_0 \subset B_1 \subset C_1 \subset \cdots \subset B_n \subset C_n\subset \cdots\]
of separable \Cs-subalgebras in $B$
with the conditions
\begin{itemize}
\item each $B_n$ is a $\Gamma$-\Cs-subalgebra of $B$ whose restriction $\Gamma$-action is amenable,
\item each $C_n$ is simple.
\end{itemize}

Consider the closure $A$ of $\bigcup_{n\in \IN} B_n=\bigcup_{n\in \IN} C_n$.
(This $A$ is different from the \Cs-algebra in the proof of Theorem \ref{Thm:1}.)
Then clearly $A$ is a unital, simple, separable, properly infinite, non-exact $\Gamma$-\Cs-subalgebra of $B$.
Moreover the restriction action, denoted by $\alpha \colon \Gamma \acts A$, is amenable (see \cite{OS}, Proposition 3.7).
By the formula (\ref{equation:vn1}) in the proof of Theorem \ref{Thm:1}, we have
\[\mu_{0}(\delta_e\otimes 1_{C(Z)})\not\precsim \bigoplus_{g\in \Gamma} \alpha_g(\mu_{0}(\delta_e\otimes p_0))\]
in $\cM(B\otimes \IK)$ and therefore in $\cM(A\otimes \IK)$.
We also have
\[\mu_{0}(\delta_e\otimes 1_{C(Z)})\precsim \mu_{0}(\delta_e\otimes p_0) \oplus \mu_{0}(\delta_e\otimes p_0)\]
in $A\otimes \IK$ because it holds in $\mu_{0}(C) \subset A$ by (\ref{eq:trivial}).
These two relations in particular show that $\alpha$ is pointwise outer.
Hence $A\rca{\alpha} \Gamma$ is simple by Kishimoto's theorem \cite{Kis}.
The above relations also show the finiteness of $\mu_{0}(\delta_e\otimes p_0)$ in $A\rca{\alpha} \Gamma$ by Lemma \ref{Lem:fp} and \cite{Cun}.
\end{proof}
We close the article by giving several remarks on the construction in the proof of Theorem \ref{Thm:2}.
We use the notations in the proof of Theorem \ref{Thm:2}.
\begin{Rem}
Since K$_0(\cM(C))={\rm K}_1(\cM(C))=\{0\}$ (see \cite{BlaK}, Propostion 12.2.1),
one has K$_0(B)={\rm K}_1(B)=\{0\}$.
Hence by slightly modifying the construction in the proof of Theorem 2, one can arrange the resulting \Cs-algebra $A$ therein
to further satisfy ${\rm K}_0(A)={\rm K}_1(A)=\{0\}$.

Indeed, for any unital separable \Cs-subalgebra $C\subset B$, it is not hard to find
a separable \Cs-subalgebra $C\subset D \subset B$ with K$_0(D)={\rm K}_1(D)=\{0\}$.
We additionally apply this construction to each $C_n$ in the proof of Theorem \ref{Thm:2},
and then require $B_{n+1}$ to contain $D_n$.
As a result, we obtain a new increasing sequence
\[B_0 \subset C_0 \subset D_0 \subset  B_1 \subset C_1 \subset D_1 \subset  \cdots \subset B_n \subset C_n\subset D_n \subset \cdots\]
of separable \Cs-subalgebras
with
\begin{itemize}
\item $B_n$ and $C_n$ have the same properties as in the proof,
\item K$_0(D_n)={\rm K}_1(D_n)=\{0\}$ for $n\in \IN$.
\end{itemize}
By the continuity of the $K$-theory,
the closure of $\bigcup_{n\in \IN} B_n= \bigcup_{n\in \IN} C_n =\bigcup_{n\in \IN} D_n$
gives the desired $\Gamma$-\Cs-algebra.
\end{Rem}
\begin{Rem}\label{Rem:central}
For a given action $\Gamma \acts X$ on a compact metrizable space
with a dense orbit $\Gamma \cdot x$,
the orbit map $g \mapsto g\cdot x$ induces the unital $\Gamma$-equivariant embedding $\iota \colon C(X) \rightarrow \ell^\infty(\Gamma)$.
Using this property, one can further arrange $A$ to admit a unital $\Gamma$-equivariant embedding
of $C(X)$ into its central sequence algebra.
Indeed one can choose $A$ in Theorem \ref{Thm:2} to further satisfy $\mu_{n}(\iota(C(X)))\subset A$ for all $n\in \IN$.
The sequence of the maps $(\mu_{n}\circ \iota)_{n\in \IN}$ defines the desired map. 

In particular, when $\Gamma$ is finite or a free abelian group of finite rank,
applying this to a free metrizable profinite action, one can arrange the action $\alpha \colon \Gamma \acts A$
as in Theorem \ref{Thm:2} with the Rohlin property \cite{KisR}, \cite{IzuR}, \cite{Mat}.

Also, for any countable exact group $\Gamma$, by choosing $\Gamma \acts X$ to be free,
one can arrange $\alpha \colon \Gamma \acts A$ to be
centrally free in the sense of \cite{SuzCMP}, Definition 4.1 (that is, the condition therein holds with $B=A$).
\end{Rem}
\begin{Rem}
For a given action $\eta \colon \Gamma \acts D$ on a unital separable \Cs-algebra,
one can arrange $\alpha \colon \Gamma \acts A$ in Theorem \ref{Thm:2} 
to contain $D$ as a unital $\Gamma$-\Cs-algebra.
To see this, observe that, by choosing $B_0$ large enough,
one can arrange the resulting action $\alpha \colon \Gamma \acts A$
with the following property: the fixed point algebra $A^\Gamma$ contains
$D\rca{\eta} \Gamma$ as a unital \Cs-subalgebra.
(This is possible, since there are unital embeddings 
$D\rca{\eta} \Gamma \rightarrow \cM(\IK)$ and $\cM(\IK) \rightarrow B^\Gamma$.)
For $g\in \Gamma$, let $u_g \in D\rca{\eta} \Gamma \subset A^\Gamma$ be its implementing unitary element.
Then $u\colon \Gamma \rightarrow \mathcal{U}(A)$ is an $\alpha$-$1$-cocycle.
Thus one has a new action $\alpha^u \colon \Gamma \acts A$ given by
\[\alpha^u_g:=\ad(u_g)\circ \alpha_g \quad {\rm~for~}g\in \Gamma.\]
Clearly this gives the desired action.
\end{Rem}
\begin{Rem}While $B$ is non-separable,
the proof of Theorem \ref{Thm:2} also shows that $\Gamma \acts B$ is amenable and pointwise outer,
and that both $B$ and $B \rtimes_{\rm r}\Gamma$ have a finite and an infinite projection.
\end{Rem}

\subsection*{Acknowledgements}
This work was supported by JSPS KAKENHI (Grant-in-Aid for Early-Career Scientists)
Grant Numbers JP19K14550, JP22K13924.
The author is grateful to the referee for his/her careful reading and many suggestions which were very helpful to improve the exposition of the article.

\end{document}